\def\stretch{\vartriangleleft\!\!\rightsquigarrow}
\newenvironment{proof}{\noindent {\it Proof.}\hspace{1pt}}
{\hfill$\qed$\bigskip}
\newtheorem{theorem}{Theorem}[section]
\newtheorem{lemma}{Lemma}[section]
\theoremstyle{definition}
\theoremstyle{remark}
\newtheorem{remark}{Remark}[section]
\begin{document}

\begin{frontmatter}

\title{Complex dynamics in a nerve fiber model with periodic
coefficients  }

\author[label1]{Chiara Zanini, Fabio Zanolin}
\address[label1]{
Dipartimento di Matematica e Informatica, Universit\`a,\\ via delle
Scienze 206, 33100 Udine, Italy
\\
mailto: chiara.zanini@dimi.uniud.it, \quad fabio.zanolin@dimi.uniud.it}

\begin{abstract}
\noindent We deal with the periodic boundary value problem for a
second-order nonlinear ODE which includes the case of the Nagumo
type equation $v_{xx} - g v + n(x) F(v) = 0,$ previously
considered by Grindrod and Sleeman and by Chen and Bell in the
study of nerve fiber models. In some recent works
we discussed the case of nonexistence of nontrivial solutions as
well as the case in which many positive periodic solutions
may arise, the different situations depending by threshold
parameters related to the weight function $n(x).$ Here we show
that for a step function $n(x)$ (or for small perturbations of it) it is
possible to obtain infinitely many periodic solutions and chaotic
dynamics, due to the presence of a topological horseshoe
(according to Kennedy and Yorke).
\end{abstract}

\begin{keyword}
Nagumo type equation \sep periodic solutions \sep subharmonics
\sep chaotic like dynamics \sep topological horseshoe \sep
Bernoulli shift.

{\em {2000 AMS  subject classification~:}} {34C25} \sep {37E40} \sep
{92C20}.
\end{keyword}

\date{}
\end{frontmatter}

\section{Introduction and statement of the main results}\label{sec-1}
This paper deals with the study of a class of nonlinear second order ODEs
which was proposed by Chen and Bell \cite{ChBe-94} as a model for a nerve fiber with
spines. In their model, the authors consider a nerve fiber as an infinitely long
cable. The spines, which are small protrusions on the membrane, are assumed to
be periodically distributed. Their population is characterized by a density
$n=n(x)$ and by a lumped ohmic resistance $1/\rho$ of the spine stem.
The current flowing through the spine stem is given by
$\rho(u-v),$ where $v(x,t)$ and $u(x,t)$ represent two membrane potentials
($v$ is the one corresponding to the dendrite,
while $u$ is that at the spine head). Chen and Bell, following the
cable theory approach previously considered by Baer and Rinzel
\cite{BaRi-91} and simplifying the active membrane dynamics from the
Hodgkin-Huxley form to the FitzHugh-Nagumo's, derived the following system
$$
\left\{
\begin{array}{ll}
v_t = v_{xx} - g v + n(x) \rho (u -v)\\
u_t = \rho (u -v) + f(u),\\
\end{array}
\right.
$$
where $g> 0$ is a constant such that $1/g$ represents the dendritic membrane resistance
and $f$ has the typical cubic form of the Nagumo equation \cite{Mk-70}. The search of the
steady state solutions $u=u(x), v= v(x)$ of the above system,
lead to the study of
$$
\left\{
\begin{array}{ll}
v_{xx} - g v + n(x) f(u) =0\\
v = S(u;\rho):= u - {\rho}^{-1} f(u).\\
\end{array}
\right.
$$
In \cite{ChBe-94}, the authors discuss, separately, the cases in which
the function $S$ is invertible or not (for a given $\rho$).
If $S$ is invertible and therefore we can write $u = {\tilde R}(v),$
for a suitable monotone increasing function ${\tilde R},$ we can
confine ourselves to the study
of the nonlinear second order scalar ODE
\begin{equation}\label{eq-1.1}
v_{xx} - g v + n(x) F(v) = 0,
\end{equation}
with $F(v):= f({\tilde R}(v)).$ We note that the possibility of inverting $S$
and therefore considering equation \eqref{eq-1.1} is not too restrictive
and can be assumed when $\rho$ is sufficiently large.
In \cite{ChBe-94} the function $n(x)$ is supposed to be piecewise constant
and $\beta$-periodic. More precisely, two positive constants $n_0$ (small)
and $n_1$ (large) are taken such that
$$
n(x)=\; \left\{
\begin{array}{ll}
n_1\,,\quad \mbox{for } \, 0\leq x < \alpha,\\
n_0\,,\quad \mbox{for } \, \alpha\leq x < \beta.
\end{array}
\right.
$$
In \cite[Lemma 4.1]{ChBe-94},
assuming further a condition which implies that $\int_0^1 F(s)\,ds > 0,$
it is stated that equation \eqref{eq-1.1} has a positive
periodic solution when $\alpha$ is sufficiently large (i.e., $\alpha$ is close to $\beta$).

In some recent papers \cite{ZaZa-05,ZaZa->} we extended the above recalled Chen and Bell
result toward different directions. In particular, in \cite{ZaZa-05}, we proved
(via a variational approach) that the existence of at least two positive periodic solutions
is guaranteed for an arbitrary weight function $n(x),$ provided that $\int_0^{\beta} n(x) \,dx$
is sufficiently large. On the other hand, in \cite{ZaZa->},
we obtained (via the Poincar\'e-Birkhoff fixed point theorem)
the existence of many positive periodic solutions under more restrictive
hypotheses on $n(x)$ (but general enough to include the piecewise constant case) and
removing the requirement that $\int_0^1 F(s)\,ds > 0.$
In this work we come back to the original assumptions on $n(x)$ of Chen and Bell's paper with
the aim to prove that their model presents solutions with a very complicated behavior.

In order to introduce our results, we consider equation \eqref{eq-1.1},
where, throughout the paper,
$g > 0,$ $n: {\mathbb R}\to {\mathbb R}^+:=[0,+\infty)$
is a $\beta$-periodic locally integrable function and
$F: {\mathbb R}\to {\mathbb R}$ is a $C^1$ function with a
$N$-shaped graph. More precisely, we assume that
there exists $a\in \,]0,1[$ such that
$$F(0) = F(a) = F(1) = 0$$
and, moreover,
$$
F(s) < 0 \; \mbox{ for } \,\,  0 < s < a \;
\mbox{ and  } \,\, s > 1, \quad  F(s) > 0\; \mbox{ for } \,\,  s
< 0 \; \mbox{ and  } \,\, a < s < 1.$$
A typical function $F$ satisfying such conditions
is given by the cubic nonlinearity
\begin{equation*}
F(s) = s(s-a)(1-s)
\end{equation*}
of the celebrated FitzHugh-Nagumo equation \cite{Ha-75,Ha-81}.
We look for solutions $v(x)$ of \eqref{eq-1.1} with
$$0 < v(x) < 1,\quad\forall\, x\in {\mathbb R}$$
and, in particular, we shall focus our attention to the
search of periodic solutions as well as to the detection
of solutions presenting some kind of chaotic-like behavior.

Starting with the 70's a great deal of theoretical and numerical results
have been concerned the investigation of nonlinear ordinary and partial differential
equations and systems associated to the FitzHugh-Nagumo's.
Variational or topological functional analytic
methods have been successfully applied in the study of various boundary
value problems associated to such equations. With this respect, we also
recall some recent works by Sweers-Troy \cite{SwTr-03} and Dancer-Yan
\cite{DaYa-06a,DaYa-06b} where an abundance of solutions with a peculiar
behavior for the Dirichlet or Neumann BVPs
is established for related (although different from ours) Nagumo type systems.
We stress, however, that some special features of equation \eqref{eq-1.1},
like the presence of a periodic weight, makes natural to address our
investigation toward different aspects that, perhaps, have not been
yet fully discussed in the literature.

In the present paper we propose a dynamical system approach.
To this end, we think at the one-dimensional $x$-variable
(representing the longitudinal axial dimension of the idealized
nerve fiber) as a temporal variable, in order to deal with
a first order system in the plane of the form
\begin{equation}\label{eq-1.3}
\left\{
\begin{array}{ll}
v' = y\\
y'= g v - n(t) F(v).\\
\end{array}
\right.
\end{equation}
We denote by $\psi$ the Poincar\'{e}'s operator related to system
\eqref{eq-1.3}, that is the map which associates to a point $z\in {\mathbb R}^2$
the value $\psi(z):=\zeta(\beta,z)$ of the solution $\zeta(\cdot)=\zeta(\cdot,z)$
of system \eqref{eq-1.3} with $\zeta(0) = z.$ We notice that,
without further assumptions on $F,$
we don't have $\psi$ necessarily defined on the whole plane ${\mathbb R}^2.$
On the other hand, by the smoothness of $F,$ we know that $\psi$
is a (orientation-preserving) homeomorphism of its domain
$D_{\psi}\subseteq {\mathbb R}^2$ onto its image $R_{\psi}\subseteq {\mathbb R}^2.$

Our main result is the following:

\begin{theorem}\label{th-1.1}
Assume that $F(\cdot)$ is strictly convex on $[0,a],$
strictly concave on $[a,1]$
and satisfies
\begin{equation}\label{eq-1.4}
\int_0^1 F(s)\,ds > 0.
\end{equation}
Define
\begin{equation}\label{eq-1.m}
m^*_0:= {g}\bigl/\bigl(\, {\max_{s\in [a,1]}
\tfrac{F(s)}{s}}\,\bigr)\bigr. .
\end{equation}
Let $0 < \alpha < \beta$ and
suppose that $n(\cdot)$ is the stepwise function given by
\begin{equation}\label{eq-1.5}
n(t):=\; \left\{
\begin{array}{ll}
n_1\,,\quad \mbox{for } \, 0\leq t < \alpha,\\
n_0\,,\quad \mbox{for } \, \alpha\leq t < \beta,
\end{array}
\right.
\end{equation}
with $0 < n_0 < n_1\,.$
Then there exists
${\omega}^* > m^*_0$
such that for every $n_0$ and $n_1$ with
$$0 < n_0 < m^*_0 < {\omega}^* < n_1\,,$$
there exist a compact invariant set
$$\Lambda\subseteq \,]a,1[\times\,]0,+\infty)$$
for which $\psi|_{\Lambda}$ is semiconjugate via a continuous surjection $h$
to a two-sided Bernoulli shift on two symbols.
Moreover, for each periodic sequence of two symbols $(i_k)_{k\in {\mathbb Z}}$
there is a point $z\in \Lambda$ which is periodic and such that $h(z) = (i_k)_{k\in {\mathbb Z}}$.
\end{theorem}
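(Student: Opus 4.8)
The plan is to write the Poincar\'e map of \eqref{eq-1.3} as a composition $\psi=\psi_{n_0}\circ\psi_{n_1}$, where $\psi_{n_1}$ is the time-$\alpha$ map of the \emph{autonomous} planar system obtained from \eqref{eq-1.3} by freezing $n\equiv n_1$ and $\psi_{n_0}$ is the time-$(\beta-\alpha)$ map of the one with $n\equiv n_0$, and then to apply the ``stretching along paths'' technique for topological horseshoes in the form of Kennedy and Yorke. The first task is to describe the two autonomous phase portraits near the strip $a\le v\le1$. Both are conservative and reversible in $y$, with potentials $W_j(v)=-\tfrac g2v^2+n_j\int_0^vF$. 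For $n=n_1$, since $\phi_1(v):=W_1'(v)=n_1F(v)-gv$ is strictly concave on $[a,1]$ (strict concavity of $F$) with $\phi_1(a),\phi_1(1)<0$, the hypothesis $n_1>m^*_0$ (with $m^*_0$ as in \eqref{eq-1.m}) yields exactly two zeros $a<v_-^{(1)}<v_+^{(1)}<1$: $v_-^{(1)}$ is a strict minimum of $W_1$, hence a center $P_1$, while $v_+^{(1)}$ and $0$ are strict maxima, hence saddles ($W_1''(0)=n_1F'(0)-g<0$ because $F'(0)<0$ by strict convexity on $[0,a]$). By \eqref{eq-1.4}, $W_1(0)=0<W_1(1)$, so the origin is the \emph{lower} of the two saddles; therefore the nest of closed orbits encircling $P_1$ is bounded by the homoclinic loop $\Gamma_1$ to the origin, with $\Gamma_1$ contained in $\{0\le v\le v^*\}$, $v^*<1$ the point where $\int_0^{v^*}F=0$. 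For $n=n_0$, the inequality $n_0<m^*_0$ forces $\phi_0(v)=n_0F(v)-gv<0$ on all of $]a,1[$, so this system has \emph{no} equilibrium in the strip; the only equilibrium is the saddle at the origin, $W_0$ has a global maximum there and decreases on both sides, and inside $\{a<v<1\}$ the flow is purely ``dispersive'', comparable with the linear saddle $v''=gv$, so that it spreads a roughly vertical arc over a wide range of $v$-values in time $\beta-\alpha$.

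Second, I would quantify the twist of $\psi_{n_1}$. Rescaling time by $s=\sqrt{n_1}\,t$ turns the $n_1$-system into an $O(1/n_1)$ perturbation of the limit equation $v''=-F(v)$ run for the long $s$-time $\sqrt{n_1}\,\alpha$. The equation $v''=-F(v)$ has a center at $a$ with a non-isochronous nest of closed orbits (the $s$-period is non-constant on the nest — here strict convexity/concavity of $F$ is used to keep the period well-behaved), so any closed sub-annulus $\mathcal A'$ of the nest around $P_1$ is rotated by $\psi_{n_1}$ through a number of turns that tends to $+\infty$ with $n_1$, the rotation amplitude on the inner boundary of $\mathcal A'$ exceeding that on the outer boundary by more than any prescribed amount. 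This is how the constant $\omega^*>m^*_0$ is chosen: large enough that, for every $n_1>\omega^*$, the twist amplitude on a conveniently fixed $\mathcal A'$ is at least (say) three full turns and, at the same time, the uniform a-priori bounds mentioned below hold.

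The core is a geometric argument of linked-twist / crossing type. One selects an oriented topological rectangle $\widetilde{\mathcal R}=(\mathcal R,\mathcal R^-)$, with $\mathcal R\subset\,]a,1[\times\,]0,+\infty)$ built from a suitable portion of $\mathcal A'$ (its two sides $\mathcal R^-$ lying on pieces of the boundary orbits of $\mathcal A'$, so that the distinguished paths in $\mathcal R$ run transversally across $\mathcal A'$), and proves that $\psi=\psi_{n_0}\circ\psi_{n_1}$ satisfies $\psi:\widetilde{\mathcal R}\stretch\widetilde{\mathcal R}$ with at least two components: the strong twist makes $\psi_{n_1}(\mathcal R)$ wrap around $\mathcal A'$ at least three times while still joining the two boundary orbits, and the dispersive action of $\psi_{n_0}$ then stretches the portions of this spiral lying over two separate ``angular windows'' completely across $\mathcal R$ along its expanding direction; keeping track of orientation, this produces two disjoint compact sub-rectangles $\mathcal K_0,\mathcal K_1\subset\mathcal R$ with $\psi:(\mathcal K_i,\mathcal K_i\cap\mathcal R^-)\stretch\widetilde{\mathcal R}$, $i=0,1$. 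Along the way one needs the uniform a-priori estimates making $\psi$ (and the finitely many iterates entering the argument) well defined on $\mathcal R$ — recall $\psi$ is only a priori a homeomorphism of a subdomain of $\mathbb R^2$ — obtained by keeping the energy of the relevant orbits bounded: the $n_1$-energy confines the first phase inside a fixed sublevel set (inside $\Gamma_1$), and a comparison estimate for the $n_0$-phase, valid because $\beta-\alpha$ is fixed and the energy on $\mathcal A'$ is bounded uniformly in $n_1$, excludes finite-time blow-up; the barrier role of $\Gamma_1$ and of the lines $v=a$, $v=1$ is also what eventually confines the invariant set to $]a,1[\times\,]0,+\infty)$.

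Finally I would invoke the Kennedy--Yorke topological-horseshoe lemma in its path-stretching formulation: from two disjoint sub-rectangles $\mathcal K_0,\mathcal K_1$, each stretched by the homeomorphism $\psi$ across $\widetilde{\mathcal R}$, one obtains a nonempty compact set $\Lambda\subseteq(\mathcal K_0\cup\mathcal K_1)\subset\,]a,1[\times\,]0,+\infty)$ with $\psi(\Lambda)=\Lambda$, a continuous surjection $h:\Lambda\to\{0,1\}^{\mathbb Z}$ with $h\circ\psi=\sigma\circ h$ ($\sigma$ the two-sided shift), and the property that every periodic sequence of $\{0,1\}^{\mathbb Z}$ equals $h(z)$ for some $\psi$-periodic $z\in\Lambda$ of the same period — which is exactly the conclusion of Theorem~\ref{th-1.1}. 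The principal obstacle is the geometric step: the precise choice of $\mathcal R$ and of its sides, and the rigorous verification of the stretching relation for the \emph{composition}, i.e.\ controlling how the many windings created by the large-$n_1$ phase are reorganized by the dispersive $n_0$-phase into genuine full crossings lying in $]a,1[\times\,]0,+\infty)$. The secondary, more technical, difficulty is the uniform a-priori bounds keeping everything inside the domain of $\psi$ for all admissible $0<n_0<m^*_0<\omega^*<n_1$.
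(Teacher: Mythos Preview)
Your overall architecture is exactly the paper's: factor $\psi=\psi_0\circ\psi_1$ into the two autonomous time-maps, analyze the two conservative phase portraits, exploit the twist produced by the $n_1$-phase against the dispersive $n_0$-phase, and conclude via the stretching-along-paths version of the Kennedy--Yorke lemma. The phase-portrait description you give (center $P_1$, saddle at the origin, homoclinic loop bounding the nest of periodic orbits for $n_1$ large; no equilibria in $]0,1[$ for $n_0<m^*_0$) matches the paper's Lemmas~3.1--3.3.

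There are, however, two concrete differences between your sketch and the paper's execution that matter for closing the argument:

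\emph{Two rectangles with different energies on the sides.} You work with a single oriented rectangle $\mathcal R$ whose distinguished sides $\mathcal R^-$ lie on two ${\mathcal E}_1$-level curves, and you try to verify the stretching for the composition $\psi$ directly. The paper instead uses \emph{two} generalized rectangles: ${\mathcal A}$ in the upper half-plane with ${\mathcal A}^-$ on ${\mathcal E}_1$-level curves, and its mirror ${\mathcal B}$ in the lower half-plane with ${\mathcal B}^-$ on ${\mathcal E}_0$-level curves. One then checks separately $({\mathcal D}_i,\psi_1):({\mathcal A},{\mathcal A}^-)\stretch({\mathcal B},{\mathcal B}^-)$ and $({\mathcal B},\psi_0):({\mathcal B},{\mathcal B}^-)\stretch({\mathcal A},{\mathcal A}^-)$, and composes. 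The point is that each factor preserves the energy defining the \emph{target} sides, so the crossing verification reduces to a pure angular (respectively, time-displacement) estimate. With your single-rectangle choice, after $\psi_{n_1}$ a path from inner to outer ${\mathcal E}_1$-level still connects those same levels; the multiple crossings you need must then be manufactured entirely by $\psi_{n_0}$, which does not respect ${\mathcal E}_1$. Your sentence ``the dispersive action of $\psi_{n_0}$ then stretches the portions of this spiral \dots completely across $\mathcal R$'' is precisely where the real work hides, and the paper's two-rectangle device is what makes that work tractable.

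\emph{Homoclinic outer boundary versus a closed sub-annulus.} You take a compact sub-annulus $\mathcal A'$ and rely on non-isochronicity of the limit equation $v''=-F(v)$ to produce an unbounded twist differential as $n_1\to\infty$. The paper instead takes the \emph{homoclinic} loop $L_0$ as the outer boundary of the annulus ${\mathcal M}^1_c$. This yields the clean dichotomy of Lemma~3.5: on $L_0$ one has $\theta(\alpha,z)>-\pi$ for all $\alpha>0$ (infinite period), while on the inner level $L_c$ the period is $O(1/\sqrt{n_1})$, giving $\theta(\alpha,z)<-\tfrac{\pi}{2}-2k\pi$ for $n_1$ large. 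This avoids any delicate monotonicity statement about the period function and makes the constant $\omega^*$ easier to pin down. Your non-isochronicity claim can be made to work, but the homoclinic choice is both simpler and sharper.

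The a-priori confinement you worry about at the end is handled in the paper by a short maximum-principle argument (Lemma~3.0/5.1): once the endpoints of a trajectory lie in $]0,1[$, the whole trajectory stays there; together with the invariance of ${\mathcal M}^1_c$ under $\psi_1$ this keeps everything inside the domain of $\psi$.
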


As we shall see at the end of the proof of Theorem \ref{th-1.1} (which
is performed in Section \ref{sec-4}) the semiconjugation of
$\psi|_{\Lambda}$ allows the following interpretation in terms
of the solutions to \eqref{eq-1.1}:

{\em
Consider a two-sided sequence of two symbols $\xi:=(i_k)_{k\in {\mathbb Z}}$
with
$$i_k \in \{1,2\},\quad\forall\, k\in {\mathbb Z}.$$
Then, there is at least one solution $v_{\xi}=v_{\xi}(x)$ of \eqref{eq-1.1}
which is defined for every $x\in {\mathbb R}$ and satisfies
$$0 < v_{\xi}(x) < 1,\quad\forall\, x\in {\mathbb R}$$
as well as
$(v_{\xi}(0),\tfrac {d}{dx} v_{\xi}(0))\in \Lambda.$
Moreover, the symbol $i_k =1$ means that
$v_{\xi}(x)$ has precisely two strict maximum points separated by
one strict minimum point along the time interval
$[(k-1)\beta,(k-1)\beta +\alpha],$
while, the symbol $i_k =2$ means that
$v_{\xi}(x)$ has precisely three strict maximum points separated by
two strict minimum points along the time interval
$[(k-1)\beta,(k-1)\beta +\alpha].$
In both the situations, $v_{\xi}(x)$ is convex in the interval $[(k-1)\beta +\alpha,k\beta],$
with $v'_{\xi}((k-1)\beta + \alpha) < 0$ and $v'_{\xi}(k\beta) > 0.$
If the sequence $(i_k)_{k\in {\mathbb Z}}$ is
periodic, that is $i_k = i_{k+\ell}$ for some $\ell \geq 1,$ then we can take
$v_{\xi}(\cdot)$ as a $\ell \beta$-periodic solution as well.
}

\medskip

A minor modification in the argument of the proof of Theorem \ref{th-1.1} yields to the following result.

\begin{theorem}\label{th-1.2}
Assume that $F(\cdot)$ is strictly convex on $[0,a],$
strictly concave on $[a,1]$
and satisfies \eqref{eq-1.4}.
Let
$m^*_0$
be like in \eqref{eq-1.m} and set
$$m^*_1:= \frac{g}{2\int_0^1 F(s)\,ds}\,.$$
Suppose that $n(\cdot)$ is
defined as in \eqref{eq-1.5} with
$$0 < n_0 < m^*_0 < m^*_1 < n_1\,.$$
Then there exist $\alpha^* >0$ and $\delta > 0$ such that
for every $\alpha$ and $\beta$ with
$$\alpha^* < \alpha < \beta < \alpha +\delta,$$
the same conclusion of Theorem \ref{th-1.1} holds.
\end{theorem}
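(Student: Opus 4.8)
\noindent{\it Sketch of proof.} The plan is to re-run the argument of Theorem~\ref{th-1.1} essentially verbatim, re-examining only the two places where a largeness/smallness requirement on the weights is invoked, and checking that in the regime ``$\alpha$ large, $\beta-\alpha$ small'' those requirements follow from the single explicit bound $n_1>m^*_1$ (together with $n_0<m^*_0$). Recall that the proof of Theorem~\ref{th-1.1} builds $\Lambda$ and the semiconjugation $h$ by the topological ``stretching along the paths'' method: one factors the Poincar\'e map as $\psi=\psi_{[\alpha,\beta]}\circ\psi_{[0,\alpha]}$, where $\psi_{[0,\alpha]}$ is the time-$\alpha$ map of the \emph{autonomous} system with weight $n_1$ and $\psi_{[\alpha,\beta]}$ the time-$(\beta-\alpha)$ map of the autonomous system with weight $n_0$; one exhibits two disjoint generalized rectangles $\mathcal{R}_1,\mathcal{R}_2\subset\,]a,1[\times\,]0,+\infty)$ such that the stretching-along-paths relation $\stretch$ holds from $\mathcal{R}_i$ to $\mathcal{R}_j$ under $\psi$ for all $i,j\in\{1,2\}$; the topological horseshoe lemma (Kennedy--Yorke, in the form used in Section~\ref{sec-4}) then yields the compact invariant set, the semiconjugacy to the two-sided Bernoulli shift on two symbols, and the periodic points sitting over periodic itineraries.

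Two facts underlie the construction: \textup{(I)} for the weight $n_1$ the autonomous system possesses a period annulus $\mathcal{A}\subset\,]a,1[\times\mathbb{R}$ surrounding the center located where $gv=n_1F(v)$, large enough to carry the rectangles; and \textup{(II)} over $[0,\alpha]$ the twist of $\psi_{[0,\alpha]}$ across $\mathcal{A}$ --- the variation of the rotation number between the inner and the outer boundary of a compact sub-annulus $\mathcal{A}'\Subset\mathcal{A}$ --- exceeds $2$, so that the two consecutive ``windows'' producing the $2$-maxima and the $3$-maxima alternatives of the interpretation are available; meanwhile the short last phase $\psi_{[\alpha,\beta]}$, using $n_0<m^*_0$ (which forces $v''=gv-n_0F(v)>0$ on the whole strip $a\le v\le1$, hence the convex ``U-turn'' with $v'(\alpha)<0$, $v'(\beta)>0$), performs the reinjection that links the wound-up rectangles back across $\mathcal{R}_1\cup\mathcal{R}_2$. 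In Theorem~\ref{th-1.1}, with $\alpha,\beta$ fixed, both \textup{(I)} and \textup{(II)} are secured by taking $n_1$ above a threshold $\omega^*=\omega^*(\alpha,\beta,g,F)$; letting $n_1$ be huge compensates for $\alpha$ being fixed.

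For Theorem~\ref{th-1.2} I keep $n_0,n_1$ fixed with $0<n_0<m^*_0<m^*_1<n_1$ and instead let $\alpha\to+\infty$, $\beta-\alpha\to0^+$. For~\textup{(I)}, writing the energy of the $n_1$-system as $\tfrac{1}{2}y^2+V_{n_1}(v)=E$ with $V_{n_1}(v):=n_1\!\int_0^v F(s)\,ds-\tfrac{g}{2}v^2$, the hypothesis $\int_0^1 F>0$ together with the convexity of $F$ on $[0,a]$ and its concavity on $[a,1]$ locates the center in $\,]a,1[\,$ and a surrounding annulus of closed orbits; the sharp condition that the orbits used in the construction stay inside the strip $\,]a,1[\,$ degenerates, as $\beta\to\alpha^+$, into $V_{n_1}(1)>V_{n_1}(0)=0$, that is precisely into $n_1>g/\bigl(2\!\int_0^1 F(s)\,ds\bigr)=m^*_1$; hence~\textup{(I)} holds for every $n_1>m^*_1$, uniformly in $\beta-\alpha$ small. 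For~\textup{(II)}, with $n_1$ now fixed the rotation number of $\psi_{[0,\alpha]}$ on a compact sub-annulus $\mathcal{A}'$ bounded away from the center and from the outer separatrix equals $\alpha$ times the reciprocal-period function (the strict convexity/concavity of $F$ is used here to keep that function monotone, hence the twist non-degenerate) up to a bounded error, so the twist of $\psi_{[0,\alpha]}$ across $\mathcal{A}'$ tends to $+\infty$ with $\alpha$; fix $\alpha^*$ so large that for $\alpha>\alpha^*$ this twist exceeds $2$ on a suitable $\mathcal{A}'$ and the counting of ``$2$ versus $3$'' maxima on $[0,\alpha]$ is as stated in the interpretation. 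Finally, with $\alpha$ (hence $\mathcal{A}'$, $\mathcal{R}_1$, $\mathcal{R}_2$) now fixed, choose $\delta>0$ so small that, for $0<\beta-\alpha<\delta$, the map $\psi_{[\alpha,\beta]}$ is defined and $C^0$-close to the identity on the compact set at hand, the ``U-turn'' keeps the trajectory in $\,]a,1[\times\mathbb{R}$ throughout $[\alpha,\beta]$ (this is where the \emph{upper} bound $\delta$ is essential: a long $n_0$-phase would let $v$ drop below $a$), and the composition $\psi=\psi_{[\alpha,\beta]}\circ\psi_{[0,\alpha]}$ still stretches $\mathcal{R}_i$ onto $\mathcal{R}_j$ for all $i,j$. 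The conclusion of Theorem~\ref{th-1.1} then applies unchanged.

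The main obstacle is uniformity together with the order of the choices. One must check that the geometric estimates of the Theorem~\ref{th-1.1} proof, originally written for ``$n_1$ huge'', survive when $n_1$ is merely above $m^*_1$ --- in particular that the period annulus does not degenerate and that a sub-annulus $\mathcal{A}'$ on which the period function is monotone can be chosen with $\alpha$-independent width --- and that $\alpha^*$ is fixed \emph{before} $\delta$, since the admissible size of the perturbation $\psi_{[\alpha,\beta]}$ depends on the now-fixed geometry of $\mathcal{R}_1,\mathcal{R}_2$. Everything else --- the horseshoe lemma, the identification of $\Lambda$, the surjection $h$, and the periodic-point statement --- is quoted verbatim from the proof of Theorem~\ref{th-1.1}.
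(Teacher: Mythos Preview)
The paper itself omits the proof of Theorem~\ref{th-1.2}, saying only that it ``relies on slight modifications of the arguments described above''. Your high-level strategy---keep $n_0,n_1$ fixed with $n_1>m^*_1$ so that the homoclinic and the period annulus of $(E_1)$ exist (Lemma~\ref{lem-3.1}), and then use $\alpha$ large (rather than $n_1$ large) to produce the required twist in Lemma~\ref{lem-3.5}---is exactly what the paper is pointing at and is correct.

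However, your treatment of the second factor $\psi_{[\alpha,\beta]}=\psi_0$ is internally inconsistent, and this is where a real gap appears. You assert simultaneously that $\psi_0$ is ``$C^0$-close to the identity'' and that it produces ``the `U-turn' \dots\ with $v'(\alpha)<0$, $v'(\beta)>0$''. These are incompatible: a near-identity map cannot carry the lower set ${\mathcal B}\subseteq\{y<0\}$ across the upper set ${\mathcal A}\subseteq\{y\ge 0\}$. In the paper's proof of Theorem~\ref{th-1.1} the stretching $({\mathcal B},\psi_0):({\mathcal B},{\mathcal B}^-)\stretch({\mathcal A},{\mathcal A}^-)$ is not perturbative at all; it rests on the two time estimates \eqref{eq-3pck} and \eqref{eq-3phat}. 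When $\beta-\alpha$ is small, \eqref{eq-3pck} is trivially satisfied, but \eqref{eq-3phat}, namely $\sigma_0(p_1,b_{n_1})<(\beta-\alpha)/2$, \emph{fails} for every choice of $p_1<b_{n_1}$ once $\beta-\alpha$ drops below the fixed positive number $2\sigma_0(p_1,b_{n_1})$. So the geometry of Section~\ref{sec-4} does not carry over simply by declaring $\psi_0$ a small perturbation.

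The actual ``slight modification'' must therefore occur in how ${\mathcal N}'_c,{\mathcal N}''_c$ and the target set are chosen: instead of asking $\psi_1$ to deposit ${\mathcal D}_i$ into the fourth quadrant (the reflection ${\mathcal B}$) and then relying on a long $n_0$-phase to return, one lets the large $\alpha$ force $\psi_1$ to land back in (or near) ${\mathcal N}_c$ itself---i.e.\ one shifts the target angular windows by $\pi/2$ so that $\theta(\alpha,z)\in[-2k\pi,-2k\pi+\tfrac{\pi}{2}]$---and then uses a \emph{nonzero} $\beta-\alpha$ to break the $E_1$-level invariance of $\psi_1$ just enough to secure the crossing of ${\mathcal A}^-$. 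Your sketch does not identify this redefinition, and because $\psi_1$ alone is integrable (it preserves the level lines that form ${\mathcal A}^-$), the sentence ``the composition \dots\ still stretches $\mathcal R_i$ onto $\mathcal R_j$'' is precisely the step that needs an argument. You should also note that the order of quantifiers in the theorem ($\alpha^*$ and $\delta$ fixed first, then \emph{any} $\beta-\alpha\in(0,\delta)$) forces you to produce estimates that are uniform as $\beta-\alpha\to 0^+$; your paragraph on ``uniformity together with the order of the choices'' flags this, but the proposed mechanism does not deliver it.
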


We observe that it is possible to obtain extensions of Theorem \ref{th-1.1}
and Theorem \ref{th-1.2} by producing symbolic dynamics on $p$ objects
with $p > 2$ (see Remark \ref{rem-4.1}).

\section{Notation and basic tools}\label{sec-2}
Throughout the paper we denote by ${\mathbb R}$, ${\mathbb R}^+$ and
${\mathbb R}^+_{0}$ the sets of real, real nonnegative and positive numbers.
By $||\cdot||$ me mean a given norm (for instance, the euclidean one) in ${\mathbb R}^2.$

In the sequel we consider some special subsets of the plane, called {\em generalized rectangles}.
By such a name we mean any subset of ${\mathbb R}^2$ which is homeomorphic to the unit square
$[0,1]^2.$ It is possible to define an orientation for a generalized rectangle ${\mathcal R}$,
by selecting two disjoint compact sub-arcs ${\mathcal R}_{l}^-$ and ${\mathcal R}_{r}^-$ of its boundary,
called the left and the right sides of ${\mathcal R}$. In a more formal way,
using the Jordan-Shoenflies theorem, we can choose a homeomorphism $h$ of the plane onto itself
such that
$$h([0,1]^2) = {\mathcal R},\quad h(\{0\}\times [0,1]) = {\mathcal R}_{l}^-\;\; \mbox{and }\;
h(\{1\}\times [0,1]) = {\mathcal R}_{r}^-\,.$$
In this situation, we also set
$${\mathcal R}^- := {\mathcal R}_{l}^- \cup {\mathcal R}_{r}^-\,.$$
By a path $\gamma$ we mean a continuous map $\gamma: [0,1]\to {\mathbb R}^2$.

Let $({\mathcal A},{\mathcal A}^-)$ and $({\mathcal B},{\mathcal B}^-)$ be oriented rectangles
and suppose that
$\psi: {\mathbb R}^2 \supseteq D_{\psi} \to {\mathbb R}^2$ is a map which is continuous on a set
${\mathcal D}\subseteq D_{\psi}\cap {\mathcal A}$.
We say that $({\mathcal D},\psi)$ {\em stretches $({\mathcal A},{\mathcal A}^-)$ to $({\mathcal B},{\mathcal B}^-)$
along the paths} and write
$$({\mathcal D}, \psi) : ({\mathcal A}, {\mathcal A}^-) \stretch
({\mathcal B}, {\mathcal B}^-),$$
if, for every path $\gamma : [0,1]\to {\mathcal A}$ with $\gamma(0) \in {\mathcal A}_{l}^-$
and $\gamma(1) \in {\mathcal A}_{r}^-$, there exists a subinterval $[t_1,t_2]\subseteq [0,1]$
such that
$$\gamma(t) \in {\mathcal D},\quad \psi(\gamma(t)) \in {\mathcal B},\;\;\forall\,  t\in [t_1,t_2]$$
and
$\psi(\gamma(t_1)),$ $\psi(\gamma(t_2))$ belong to different components of ${\mathcal B}^-.$

It is easy to check that this property of stretching along the paths is preserved under the
compositions of maps. Moreover, as proved in \cite{PaZa-04a,PaZa-04b}, if
$$({\mathcal D}, \psi) : ({\mathcal R}, {\mathcal R}^-) \stretch
({\mathcal R}, {\mathcal R}^-),$$
then there exists at least one fixed point for $\psi$ in ${\mathcal D}.$ Such a fixed point property,
when applied to different subsets of the domain and to the iterates of $\psi$, may be exploited
in order to find many different periodic points for the map $\psi.$

The key tool that we use for the proof of the existence of chaotic-like dynamics for the Poincar\'{e}'s map
associated to equation \eqref{eq-1.1} is the following lemma which combines the above mentioned
fixed point theorem with the theory of topological horseshoes developed by Kennedy and Yorke in
\cite{KeYo-01} (see also \cite{KeKoYo-01}).

\begin{lemma}\label{lem-2.1}
Let $({\mathcal R},{\mathcal R}^-)$ be an oriented rectangle
and let
$\psi: {\mathbb R}^2\supseteq D_{\psi}\to {\mathbb R}^2$ be a map.
Assume there exist compact sets
${\mathcal D}_{i}\subseteq {\mathcal R}\cap D_{\psi}$ (for $i = 1,2$)
with
$${\mathcal D}_{1}\cap {\mathcal D}_{2} = \emptyset,$$
such that
$$({\mathcal D}_{i},\psi): ({\mathcal R},{\mathcal R}^-)  \stretch ({\mathcal R},{\mathcal R}^-)\,,
\quad\forall\, i= 1,2.$$
Then the following conclusions hold:
\begin{itemize}
\item{} For any two$-$sided sequence of two symbols $(i_k)_{k \in {\mathbb Z}}\in
\Sigma_2:=\{1,2\}^{\mathbb Z},$
there exists a sequence $(w_k)_{k\in {\mathbb Z}}$
such that $w_k\in {\mathcal D}_{i_k}$
and $\psi(w_k) = w_{k+1}\,,$ for all ${k\in {\mathbb Z}}\,;$
\item{} If the sequence
$(i_k)_{k \in {\mathbb Z}}\in \Sigma_2$
is $m$-periodic ($m\geq 1$), then there exists a corresponding sequence
$(w_k)_{k\in {\mathbb Z}}$
with $w_k\in {\mathcal D}_{i_k}$ which is $m$-periodic.
\end{itemize}
Furthermore, as a consequence of the above properties,
there exists a nonempty compact set
$$\Lambda\subseteq {\mathcal D}_1 \cup {\mathcal D}_2$$
which is invariant for $\psi$
and such that $\psi|_{\Lambda}$ is semiconjugate to the two-sided Bernoulli shift on
two symbols.
The subset of $\Lambda$ made by the periodic points of $\psi$ is dense in
$\Lambda$ and the counterimage (by the semiconjugacy) of any periodic sequence in $\Sigma_2$
contains a periodic point of $\psi$.
\end{lemma}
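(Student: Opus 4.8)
The plan is to reduce the statement to the two facts already isolated above: that stretching along paths is stable under composition, and that $(\mathcal{D},\psi):(\mathcal{R},\mathcal{R}^-)\stretch(\mathcal{R},\mathcal{R}^-)$ forces $\psi$ to have a fixed point in $\mathcal{D}$ (the fixed point theorem recalled above, from \cite{PaZa-04a,PaZa-04b}). From the first fact one manufactures, for each finite itinerary, a compact sub-rectangle of $\mathcal{R}$ that the corresponding iterate of $\psi$ stretches across $\mathcal{R}$; the second fact then produces the periodic orbits of the second bullet, a Cantor intersection argument produces the orbits with prescribed bi-infinite itinerary of the first bullet, and $\Lambda$ is taken to be the closure of the periodic orbits so obtained, with $h$ the itinerary map.

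First I would record the composition step in the needed form: if $(\mathcal{D}',\psi)$ and $(\mathcal{D}'',\psi)$ both stretch $(\mathcal{R},\mathcal{R}^-)$ to itself, then so does $(\mathcal{E},\psi^2)$, where $\mathcal{E}:=\{z\in\mathcal{D}':\psi(z)\in\mathcal{D}''\}$. Indeed, given an admissible path $\gamma$, the first stretching yields a subinterval on which $\gamma$ stays in $\mathcal{D}'$ and $\psi\circ\gamma$, suitably reparametrized, is again a path in $\mathcal{R}$ joining the two components of $\mathcal{R}^-$; the second stretching applied to that path, followed by one more restriction, puts $\gamma$ in $\mathcal{E}$ and makes $\psi^2\circ\gamma$ admissible, while compactness of $\mathcal{E}$ and continuity of $\psi^2$ on it follow at once from compactness of the $\mathcal{D}$'s and continuity of $\psi$ on each. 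Iterating, for every $n\ge1$ and every word $(s_0,\dots,s_{n-1})\in\{1,2\}^n$ the compact set
\[
\mathcal{H}_{s_0\cdots s_{n-1}}:=\bigl\{\,z\in\mathcal{D}_{s_0}\ :\ \psi^{j}(z)\in\mathcal{D}_{s_j}\ \text{for}\ 1\le j\le n-1\,\bigr\}
\]
satisfies $(\mathcal{H}_{s_0\cdots s_{n-1}},\psi^{n}):(\mathcal{R},\mathcal{R}^-)\stretch(\mathcal{R},\mathcal{R}^-)$, hence by the fixed point theorem contains a fixed point of $\psi^{n}$ and is in particular nonempty. For an $m$-periodic sequence $(i_k)$, a fixed point $w_0$ of $\psi^{m}$ in $\mathcal{H}_{i_0\cdots i_{m-1}}$ gives the $m$-periodic orbit $w_k:=\psi^{k}(w_0)$, which satisfies $w_k\in\mathcal{D}_{i_k}$ and $\psi(w_k)=w_{k+1}$ for all $k$: this is the second bullet.

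For the first bullet, given $(i_k)_{k\in\mathbb{Z}}\in\Sigma_2$, set $Q_n:=\psi^{n}\bigl(\mathcal{H}_{i_{-n}\cdots i_{n}}\bigr)$. Since $\psi$ is a homeomorphism of $D_\psi$ onto $R_\psi$, $Q_n$ is precisely the set of $w$ with $\psi^{k}(w)$ defined and lying in $\mathcal{D}_{i_k}$ for all $|k|\le n$, so the $Q_n$ form a decreasing sequence of nonempty compact sets; any $w\in\bigcap_n Q_n$ has a full $\psi$-orbit with $\psi^{k}(w)\in\mathcal{D}_{i_k}$ for every $k$, and $w_k:=\psi^{k}(w)$ does the job. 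To build $\Lambda$, let $P$ be the set of periodic points of $\psi$ whose entire orbit lies in $\mathcal{D}_1\cup\mathcal{D}_2$; by the previous paragraph every periodic symbol sequence is realized by a point of $P$, so $P\ne\emptyset$, and a Cantor argument like the one above shows $\overline{P}$ sits inside the compact maximal invariant set $\bigcap_n\{w:\psi^{k}(w)\ \text{defined and in}\ \mathcal{D}_1\cup\mathcal{D}_2\ \text{for}\ |k|\le n\}$, on which $\psi$ restricts to a self-homeomorphism. Since $\psi(P)=\psi^{-1}(P)=P$, the set $\Lambda:=\overline{P}$ is compact, nonempty and $\psi$-invariant. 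Define $h:\Lambda\to\Sigma_2$ by $h(w):=(i_k)$ with $i_k$ the unique symbol such that $\psi^{k}(w)\in\mathcal{D}_{i_k}$ (well defined because $\mathcal{D}_1\cap\mathcal{D}_2=\emptyset$). Then $h\circ\psi=\sigma\circ h$, with $\sigma$ the shift $(i_k)\mapsto(i_{k+1})$, is immediate; $h$ is continuous because $\mathcal{D}_1,\mathcal{D}_2$ are disjoint compact sets and each $\psi^{k}$ is continuous on $\Lambda$; and $h$ is surjective because an arbitrary sequence, approximated on longer and longer central blocks by periodic ones, has realizing periodic points in $P\subseteq\Lambda$ which, by compactness of $\Lambda$, have a subsequence converging to a point of $\Lambda$ with the prescribed itinerary. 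Hence $\psi|_\Lambda$ is semiconjugate to the Bernoulli shift on two symbols. Finally the periodic points of $\psi$ in $\Lambda$ are exactly the elements of $P$, dense in $\Lambda=\overline{P}$ by construction, and for periodic $\xi\in\Sigma_2$ the point $w_0$ of the periodic case lies in $P$ with $h(w_0)=\xi$, so $h^{-1}(\xi)$ contains a periodic point.

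The part I expect to be delicate is the first two steps: phrasing ``stretching along paths'' carefully enough that it genuinely iterates --- keeping track that the reparametrized image path still connects the two components of $\mathcal{R}^-$, and that the sub-rectangles $\mathcal{H}$ stay inside $D_\psi$, which here is not all of $\mathbb{R}^2$ --- and correctly invoking the fixed point theorem of \cite{PaZa-04a,PaZa-04b}, the only genuinely nontrivial input. The remaining assembly is soft point-set topology, with the one subtlety that $\Lambda$ should be taken as the closure of the periodic orbits rather than as the full maximal invariant set, since density of periodic points can fail for the latter when the semiconjugacy $h$ is far from injective.
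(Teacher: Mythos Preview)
The paper does not actually prove this lemma: immediately after the statement it writes ``For the proof, see \cite{PaZa-04a,PaZa-04b} and \cite{PiZa-07} for more recent details,'' and then only comments on the meaning of the conclusion. Your sketch is precisely the argument one finds in those references: iterate the stretching-along-paths property through compositions to produce, for every finite word $s_0\cdots s_{n-1}$, a compact set $\mathcal{H}_{s_0\cdots s_{n-1}}$ on which $\psi^{n}$ stretches $(\mathcal{R},\mathcal{R}^{-})$ across itself; invoke the fixed point theorem of \cite{PaZa-04a,PaZa-04b} to obtain the periodic orbits; run a nested-compact-sets argument for arbitrary bi-infinite itineraries; and define the semiconjugacy $h$ as the itinerary map. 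So your proposal is correct and matches the intended route.

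Two small remarks. First, Lemma~\ref{lem-2.1} as stated does not assume $\psi$ is a homeomorphism (that is a property of the Poincar\'e map introduced only in Section~\ref{sec-1}), so when you write ``since $\psi$ is a homeomorphism of $D_{\psi}$ onto $R_{\psi}$'' to identify $Q_{n}$ with a two-sided cylinder set, you are importing an extra hypothesis. The argument still goes through without it: nestedness $Q_{n+1}\subseteq Q_{n}$ holds because $\psi(\mathcal{H}_{i_{-n-1}\cdots i_{n+1}})\subseteq\mathcal{H}_{i_{-n}\cdots i_{n}}$, and from $w\in\bigcap_{n}Q_{n}$ one extracts a full backward orbit by a diagonal/compactness step (for each $n$ pick $z_{n}\in\mathcal{H}_{i_{-n}\cdots i_{n}}$ with $\psi^{n}(z_{n})=w$, pass to convergent subsequences of $\psi^{n-1}(z_{n}),\psi^{n-2}(z_{n}),\dots$ in the compact sets $\mathcal{D}_{i_{-1}},\mathcal{D}_{i_{-2}},\dots$). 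Second, your choice $\Lambda:=\overline{P}$ is exactly the right one to make density of periodic points automatic; taking the full maximal invariant set instead would give the semiconjugacy but not, in general, the density statement.
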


For the proof, see \cite{PaZa-04a,PaZa-04b} and \cite{PiZa-07} for more recent details.
Note that the semiconjugation to the Bernoulli shift and the density of periodic points are typical
requirements for chaotic dynamics.
The kind of chaotic-like dynamics described by Lemma \ref{lem-2.1}
is based on the definition of chaos in the coin-tossing sense
by Kirchgraber and Stoffer \cite{KiSt-89}.
The same chaotic behavior
is also obtained by Kennedy, Ko{\c{c}}ak e Yorke in \cite[Proposition 5]{KeKoYo-01}.
With respect to \cite{KeKoYo-01} and \cite{KiSt-89},
our case takes into account also of the presence of periodic itineraries
generated by periodic points. Concerning the applications to second order ODEs with periodic
coefficients,
where $\psi$ is the Poincar\'{e} map associated to an
equivalent first order differential system in the plane,
the complex dynamics we obtain is in line with similar results
appeared in the literature for different type of equations
(see, for instance Capietto, Dambrosio and Papini \cite{CaDaPa-02}).
Finally, we refer to Mischaikow and Mrozek \cite{MiMr-95} and to Zgliczy\'nski and Gidea \cite{ZgGi-04}
for related topological approaches in higher dimension.

\section{Technical lemmas}\label{sec-3}

Let us consider the equation
\begin{equation}\label{eq-3.1}
x'' - g x + n(t)F(x) = 0 \qquad (x':= \frac{dx}{dt} ),
\end{equation}
as well as the associated first order system in the phase-plane
\begin{equation}\label{eq-3.2}
\left\{
\begin{array}{ll}
x' = y\\
y' = g x - n(t) F(x),
\end{array}
\right.
\end{equation}
where $n(t)\geq 0$ is a $\beta$-periodic piecewise continuous function
that will be defined in a more precise manner in the sequel.

The map $F: {\mathbb R}\to {\mathbb R}$ is a $C^1$ function with a $N$-shaped
graph. In particular, we assume there exists $a \in \, ]0,1[\,$
such that
$$F(0) = F(a) = F(1) = 0$$
and, moreover,
$$
F(s) < 0 \; \mbox{ for } \,\,  0 < s < a \;
\mbox{ and  } \,\, s > 1, \quad  F(s) > 0\; \mbox{ for } \,\,  s
< 0 \; \mbox{ and  } \,\, a < s < 1.$$
A typical example for $F$ is given by the Nagumo type cubic nonlinearity
\begin{equation}\label{eq-3.3}
F(s) = s(s-a)(1-s).
\end{equation}
For the moment and in order to simplify the subsequent discussion,
we assume \eqref{eq-3.3}. We point out, however, that the properties we are
going to present below are still true for a broader class of functions.

We are looking for solutions $x(\cdot)$ to \eqref{eq-3.1} such that
\begin{equation}\label{eq-bound1}
0 \leq x(t) \leq 1,\quad \forall\, t\in {\mathbb R}.
\end{equation}
Accordingly, these solutions are the same also for any other equation where the
function $F(s)$ is the same like the given one in $[0,1]$ but possibly different elsewhere.
For convenience we then suppose that
$$|F(s)|\leq 1,\quad\, \forall\, s\in {\mathbb R}\setminus \,[0,1]$$
so that all the solutions of equation \eqref{eq-3.1} are globally defined.
Clearly, at the conclusion of our argument, we need to check that the solutions
we are interested in satisfy the constraint in \eqref{eq-bound1}.
To this purpose, we state the following lemma whose proof (which is valid also
for a more general class of equations) is postponed in the Appendix.
\begin{lemma}\label{lem-3.0}
Let $n: [t_0,t_1]\to {\mathbb R}^+$ be a Lebesgue integrable function
and suppose that
$F_0: {\mathbb R}\to {\mathbb R}$ is a locally Lipschitz function
such that
$$F_0(s) > 0 \, \mbox{ for } \, s < 0,\quad F_0(s) < 0  \, \mbox{ for } \, s> 1
\, \mbox{ and } \, F_0(s) = F(s),\;\forall\,
s\in [0,1].$$
Let $x(\cdot)$ be a solution of
$$x'' - g x + n(t)F_0(x) = 0,\quad (g\geq 0)$$
defined on $[t_0,t_1]$ and such that
$$0 < x(t_0), x(t_1) < 1.$$
Then,
$$0 < x(t) < 1,\quad\forall\, t\in [t_0,t_1]$$
and, therefore, $x(\cdot)$ is a solution of \eqref{eq-3.1}.
\end{lemma}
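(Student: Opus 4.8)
The plan is to argue by contradiction: suppose $x(\cdot)$ is a solution of $x'' - gx + n(t)F_0(x) = 0$ on $[t_0,t_1]$ with $0 < x(t_0), x(t_1) < 1$, but that $x(t)$ leaves the open interval $]0,1[$ somewhere inside $[t_0,t_1]$. There are two cases to rule out, by the sign conditions on $F_0$: either $x$ attains a value $\geq 1$ (equivalently, by continuity, a value $> 1$ or a tangency at $1$), or $x$ attains a value $\leq 0$.

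\textbf{Step 1: the case $\max x \geq 1$.} First I would use the hypothesis $x(t_0), x(t_1) < 1$ together with continuity to locate an interior point $\tau \in \,]t_0,t_1[\,$ where $x$ has a positive local maximum with $x(\tau) \geq 1$; more precisely, I would let $\tau$ be a point where the maximum over $[t_0,t_1]$ is attained, noting that $\tau$ must be interior. At such a point $x'(\tau) = 0$ and $x''(\tau) \leq 0$. But from the equation, $x''(\tau) = g\,x(\tau) - n(\tau)F_0(x(\tau))$. Since $x(\tau) \geq 1 > 0$ and $g \geq 0$, the term $g\,x(\tau) \geq 0$; and since $x(\tau) \geq 1$ we have $F_0(x(\tau)) \leq 0$ (it is $< 0$ if $x(\tau) > 1$, and $= F(1) = 0$ if $x(\tau) = 1$), while $n(\tau) \geq 0$, so $-n(\tau)F_0(x(\tau)) \geq 0$. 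Hence $x''(\tau) \geq 0$, which combined with $x''(\tau) \leq 0$ forces $x''(\tau) = 0$ and in fact $x(\tau) = 1$, $n(\tau)F_0(x(\tau)) = 0$. This borderline situation has to be excluded by a sharper argument — this is the delicate point (see below). Morally, once $x$ touches the level $1$ from below at an interior maximum, the "force" $-gx + nF_0(x)$ at that configuration is $\leq 0$ pushing $x$ back down, contradicting the geometry of a maximum unless everything is degenerate; a Gronwall/uniqueness-type argument or a strict-inequality refinement closes the gap.

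\textbf{Step 2: the case $\min x \leq 0$.} This is symmetric. Let $\sigma \in \,]t_0,t_1[\,$ be an interior point where $x$ attains its minimum over $[t_0,t_1]$, assuming for contradiction that $x(\sigma) \leq 0$. Then $x'(\sigma) = 0$, $x''(\sigma) \geq 0$, while from the equation $x''(\sigma) = g\,x(\sigma) - n(\sigma)F_0(x(\sigma))$: now $g\,x(\sigma) \leq 0$ since $x(\sigma) \leq 0$, and $F_0(x(\sigma)) \geq 0$ since $x(\sigma) \leq 0$ ($>0$ if $x(\sigma)<0$, $=F(0)=0$ if $x(\sigma)=0$), so $-n(\sigma)F_0(x(\sigma)) \leq 0$; hence $x''(\sigma) \leq 0$, forcing $x''(\sigma) = 0$ and the degenerate equality $x(\sigma) = 0$ with $n(\sigma)F_0(x(\sigma)) = 0$, again to be excluded as in Step 1.

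\textbf{The main obstacle.} The heart of the matter — and the part I expect to need genuine care rather than a one-line maximum-principle remark — is ruling out the degenerate boundary contact, i.e. the possibility that $x$ touches $0$ or $1$ tangentially from inside without crossing. A clean way is this: suppose $x(\tau) = 1$, $x'(\tau) = 0$ at an interior point, with $x(t) \leq 1$ nearby. Consider the constant function $\bar x \equiv 1$, which solves the autonomous comparison equation $\bar x'' - g\bar x + n(t)F_0(\bar x) = g\cdot 1 \cdot(\text{something}) $... more robustly, one uses uniqueness for the Cauchy problem: $x$ and the function $w$ defined as the solution of $w'' = g w - n(t)F_0(w)$ with $w(\tau)=1,\ w'(\tau)=0$ — but to compare with the constant $1$ one needs $F_0(1)=0$, which holds, and $g\cdot 1 - n(t)\cdot 0 = g \neq 0$ in general, so $1$ is not itself a solution; instead one estimates $1 - x(t) \geq 0$ near $\tau$ and shows via a differential inequality that $(1-x)$ cannot have an interior zero of this type unless $1 - x \equiv 0$ on a subinterval, then propagates $x\equiv 1$ to the endpoints by uniqueness, contradicting $x(t_1) < 1$. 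I would handle the $0$-level contact the same way, comparing with the constant $0$, which genuinely is a solution (since $F(0)=0$ and $g\cdot 0 = 0$), so there uniqueness of the Cauchy problem applies directly: if $x(\sigma)=0,\ x'(\sigma)=0$ then $x \equiv 0$, contradicting $x(t_0)>0$. For the $1$-level I would either arrange the hypotheses so that the tangency cannot be purely from inside (using that $x(t_0),x(t_1)<1$ strictly, so the sup is an isolated touching whose neighborhood one can analyze), or invoke a Nagumo-type barrier argument. Once both degenerate cases are excluded, the contradiction in Steps 1–2 is complete, so $0 < x(t) < 1$ throughout, and in that range $F_0 = F$, hence $x$ solves \eqref{eq-3.1}.
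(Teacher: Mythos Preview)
Your maximum/minimum-principle strategy has the right instincts, but there are two genuine gaps.

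\textbf{First, the Carath\'eodory setting.} The lemma assumes only that $n\in L^1([t_0,t_1])$, so solutions are taken in the Carath\'eodory sense: $x'$ is absolutely continuous and the equation holds only a.e. You therefore cannot evaluate $x''(\tau)$ at a single point $\tau$, and the pointwise second-derivative test at an extremum is not available. The paper sidesteps this entirely with an integral argument: if $x(\tilde t)<0$, pick $\sigma_0<\tilde t<\sigma_1$ with $x(\sigma_0)=x(\sigma_1)=0$ and $x<0$ on $\,]\sigma_0,\sigma_1[\,$, multiply $-x''=h(t,x)$ by $x$ and integrate by parts to get $\int_{\sigma_0}^{\sigma_1}x'(t)^2\,dt=\int_{\sigma_0}^{\sigma_1}h(t,x(t))x(t)\,dt<0$, a contradiction. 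The case $x>1$ is handled by multiplying by $x-1$. This is robust under the weak regularity of $n$.

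\textbf{Second, the tangency at $1$.} You correctly flag this as the delicate point and then leave it open. Two remarks. (i) If pointwise evaluation were legitimate and $g>0$, your own computation already finishes it: at $x(\tau)=1$, $x'(\tau)=0$ one gets $x''(\tau)=g\cdot 1 - n(\tau)F_0(1)=g>0$, contradicting $x''(\tau)\le 0$; you simply didn't notice that the degenerate case you feared is not degenerate when $g>0$. (ii) For a proof that works in full generality, the paper does something close to your ``compare with a nearby solution'' idea but makes it precise: define the truncated nonlinearity $\tilde h(t,s):=h(t,\max\{1,s\})$, which is Lipschitz at $s=1$ and satisfies $\tilde h(t,s)<0$ for all $s$; solve $y''+\tilde h(t,y)=0$ with $y(t^*)=1$, $y'(t^*)=0$; the sign of $\tilde h$ forces $y(t)>1$ for $t\ne t^*$, so $y$ is also a solution of the original equation near $t^*$; then uniqueness gives $x\equiv y$ locally, contradicting $x\le 1$. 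Your handling of the tangency at $0$ via uniqueness (since $x\equiv 0$ is a genuine solution) is exactly what the paper does.
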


\bigskip

As a preliminary discussion, we start by a phase-plane analysis of
the trajectories of \eqref{eq-3.2} in the case when $n(\cdot) =
\mbox{constant} = \mu > 0.$ Accordingly, we study the autonomous system
$$
\left\{
\begin{array}{ll}
x' = y\\
y' = g x - \mu\, F(x),
\end{array}
\right.  \leqno{(E)}
$$
and look at the effects of the parameter $\mu$ on the qualitative
behavior of the orbits.

\bigskip

First of all we observe that there is $m^*_0$ such that,
for $\mu\in\, ]0,m^*_0[\,$
$$g s - \mu F(s) > 0,\quad \forall \, s > 0,$$
while for each
$\mu > m^*_0$ there are two nontrivial equilibrium points
$P=(a_{\mu},0)$ and $Q=(c_{\mu},0)$ to system $(E)$ with
$$0 < a < a_{\mu} < c_{\mu} < 1$$
(see, for instance, Fig. 1 for an example which depicts this situation).
Moreover, $a_{\mu}\to a^+$ and $c_{\mu}\to 1^-$ as $\mu\to
+\infty.$ The point $P$ is a center and $Q$ is a saddle.
This is true for $F(s)$ defined in \eqref{eq-3.3} as well as for any function $F$
which is strictly concave in $[a,1].$

The actual value of $m^*_0$ can be computed as
$$m^*_0 = \frac{g}{F'(s^*)}\,,$$
where $s^* \in\, ]a,1[$ solves the equation $sF'(s) = F(s)$
yielding the maximal slope.

\bigskip

\begin{figure}[h]
\quad\psfig{file=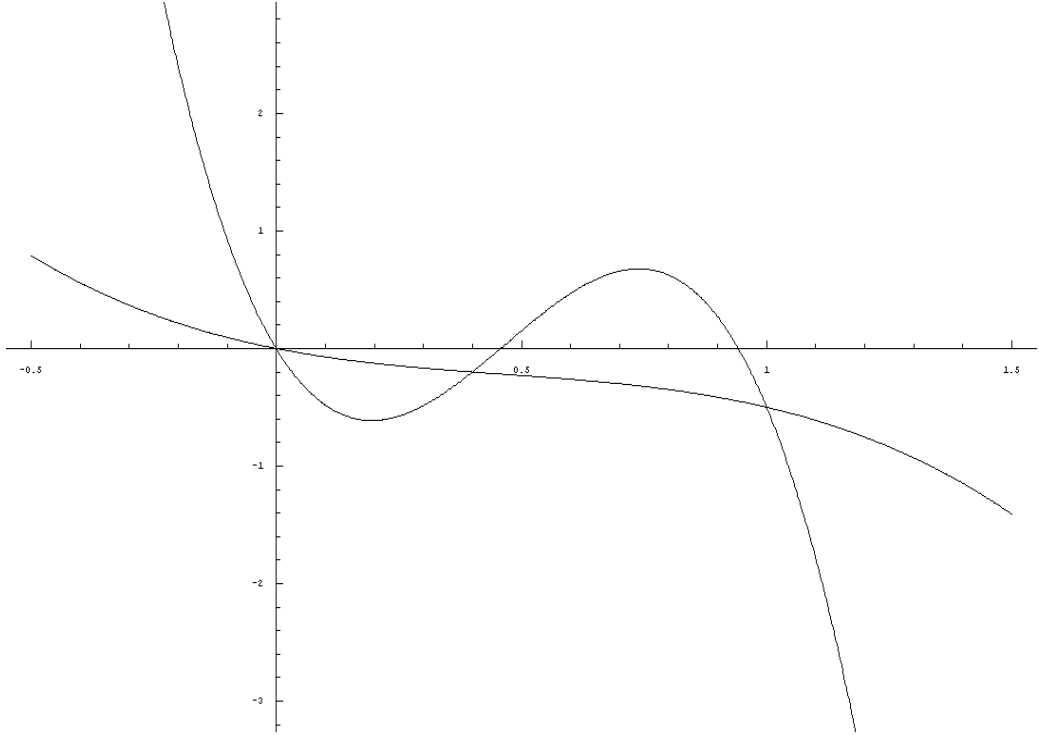,width=5.4in,height=4.2in,angle=0}
\vspace*{0in} \caption{\footnotesize{\em Different shapes for the graph of the function $y = - g s + \mu F(s),$
for a small and a large value of $\mu.$ In this example, we have chosen
$g=0.5$ and $F(x) = x(1-x)(x-a),$ with $a=0.4.$ The graph of the decreasing function
corresponds to the case $\mu = n_0 = 0.8,$ while the other graph (with two humps) is obtained
for $\mu = n_1 = 16.$}}
\vspace{0.4cm}
\end{figure}

\bigskip

System $(E)$ is of conservative type with energy
\begin{equation}\label{eq-3.en}
{\mathcal E}(x,y) = {\mathcal E}^{\mu}(x,y):= \frac{1}{2} y^2 - g
\frac{1}{2} x^2 + \mu {\mathcal F}(x),
\end{equation}
where
$$
{\mathcal F}(x):= \int_0^x F(s)\,ds.
$$
We denote by $L_c$ the part of the energy line at level $c$ lying
in the strip $0\leq x\leq 1,$ that is
$$
L_c:=\{(x,y): {\mathcal E}(x,y) = c,\; 0\leq x \leq 1\}.
$$

\bigskip

\begin{figure}[h]
\quad\psfig{file=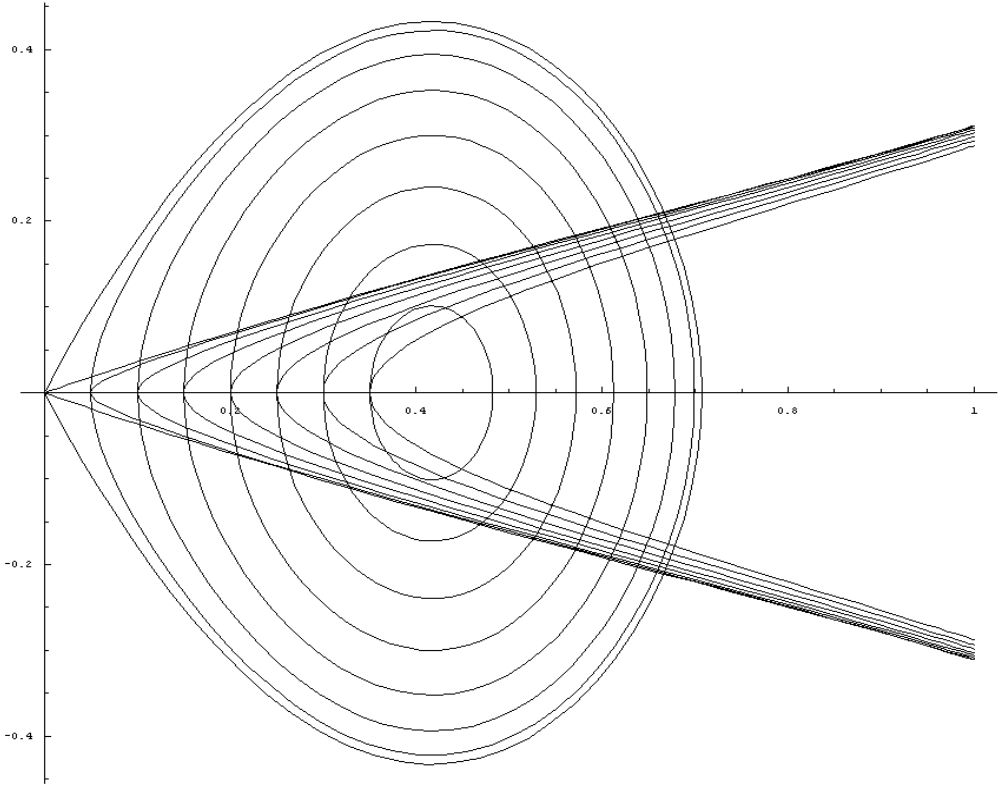,width=5.4in,height=4.2in,angle=0}
\vspace*{0in} \caption{\footnotesize{\em Level lines (at different energy) of
system (E) with $\mu =10$ (the closed lines) overlapped with those
of the same system for $\mu=0.1.$ For this example, we have chosen
$g=0.1$ and $F(x) = x(1-x)(x-a),$ with $a=0.4.$}}
\vspace{0.4cm}
\end{figure}

We present now some technical results which provide dynamical information about the
trajectories of system $(E)$ for different values of the parameter $\mu.$

\begin{lemma}\label{lem-3.1}
Suppose that $\mu \in \,]0,m^*_0[\,.$ Then, for every $w_0 = (p_0,0)$ with
$0\leq p_0 < 1$ the level line $L_c$ ($c:= {\mathcal E}(w_0)$)
passing through $w_0$ intersects any vertical line $x = \xi$ with $p_0 < \xi \leq 1$
exactly at two points
$(\xi,-y_{\xi}),$ $(\xi,y_{\xi}).$ For $0 < p_0 < 1,$ $L_c$ is an orbit path
and the time needed to move from $(\xi,-y_{\xi})$
to $w_0$ (which coincides to the time needed to move from $w_0$ to $(\xi,y_{\xi})$)
along the orbit is given by
$$
\sigma(p_0,\xi) = \int_{p_0}^{\xi} \frac{ds}
{\sqrt{g(s^2 - p_0^2) - 2\mu ({\mathcal F}(s) - {\mathcal F}(p_0))}}\,.
$$
Assume that
$$
\int_0^1 F(s)\, ds > 0.
\leqno{(H_0)}
$$
Then, there is $m^*_1 > m^*_0$ such that, for each $\mu >
m^*_1\,,$ the level line $L_0$ is a homoclinic orbit of system
$(E)$ intersecting the abscissa at a point $(b_{\mu},0)$
with
$$a_{\mu} < b_{\mu} < c_{\mu}\,.$$
Moreover, for each $c\in \,]{\mathcal E}(P),0[\,,$ the level line $L_c$ is a
closed curve which corresponds to a periodic orbit of system
\eqref{eq-3.2} having minimal period
$$
\tau(x_0) = \frac{2}{\sqrt{\mu}}\, \int_{x_0}^{x_1} \frac{ds}
{\sqrt{2({\mathcal F}(x_0) - {\mathcal F}(s)) -
\frac{g}{\mu}(x_0^2 - s^2)}}\,,
$$
where $x_0$ and $x_1,$ satisfying
$$0 < x_0 < a_{\mu} < x_1 < b_{\mu}\,,$$
are the two solutions of the equation ${\mathcal E}(x,0) = c$ with $x\in\, ]0,1[\,.$
\end{lemma}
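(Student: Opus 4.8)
The plan is to exploit the one--degree--of--freedom (conservative) structure of $(E)$ and to reduce everything to the shape of the ``potential'' $V(x):=\mathcal E(x,0)=-\tfrac g2 x^2+\mu\mathcal F(x)$, whose derivative is $V'(x)=-gx+\mu F(x)=-(gx-\mu F(x))$. On a level line $L_c$ one has $\tfrac12 y^2=c-V(x)$, so $L_c\cap\{0\le x\le1\}$ is the set of points $(x,\pm\sqrt{2(c-V(x))})$ over $\{x\in[0,1]:V(x)\le c\}$, and along it $dt=dx/|y|=dx/\sqrt{2(c-V(x))}$. Thus the whole lemma reduces to describing the graph of $V$ on $[0,1]$, which is governed by the sign of $gs-\mu F(s)$.

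First I would handle $\mu\in\,]0,m^*_0[\,$. By the definition of $m^*_0$ one has $gs-\mu F(s)>0$ for every $s>0$, hence $V$ is strictly decreasing on $]0,+\infty[$. With $c=V(p_0)=\mathcal E(w_0)$, the map $x\mapsto 2(c-V(x))=g(x^2-p_0^2)-2\mu(\mathcal F(x)-\mathcal F(p_0))$ vanishes at $p_0$ and is strictly increasing, so it is positive on $]p_0,1]$; this gives exactly the two points $(\xi,\pm y_\xi)$, $y_\xi=\sqrt{2(c-V(\xi))}>0$, on each vertical $x=\xi$ with $p_0<\xi\le1$. If moreover $0<p_0<1$ then $V'(p_0)=-(gp_0-\mu F(p_0))\ne0$, so $w_0$ is not an equilibrium and $L_c$ is precisely the orbit arc through $w_0$ inside the strip; on its lower branch $\dot x=y=-\sqrt{2(c-V(x))}<0$, so the time from $(\xi,-y_\xi)$ down to $w_0$ equals $\int_{p_0}^{\xi}dx/\sqrt{2(c-V(x))}$, which is exactly $\sigma(p_0,\xi)$ (the square--root zero at $p_0$ is integrable because $V'(p_0)\ne0$), and the reflection symmetry $(x,y,t)\mapsto(x,-y,-t)$ of $(E)$ identifies it with the time from $w_0$ up to $(\xi,y_\xi)$.

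Next, assume $(H_0)$. For $\mu>m^*_0$ the zeros of $gs-\mu F(s)=-V'(s)$ in $]0,+\infty[$ are exactly $a_\mu$ and $c_\mu$: indeed $gs-\mu F(s)>0$ on $]0,a]$ and on $[1,+\infty[$ (as $F\le0$ there), while on $]a,1[$ the quotient $gs/F(s)$ is, by the strict concavity of $F$ on $[a,1]$, strictly decreasing and then strictly increasing, with minimum $m^*_0$. Hence $V$ drops from $V(0)=0$ to $V(a_\mu)=\mathcal E(P)<0$, rises on $]a_\mu,c_\mu[$ to $V(c_\mu)=\mathcal E(Q)$, and decreases on $]c_\mu,+\infty[$. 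Set $m^*_1:=\max\{m^*_0,\,g/(2\mathcal F(1))\}$, slightly enlarged if needed so that $m^*_1>m^*_0$; for $\mu>m^*_1$ one has $V(1)=\mu\mathcal F(1)-\tfrac g2>0$, and by monotonicity on $]c_\mu,1]$ this forces $V(c_\mu)>V(1)>0$. Therefore $V$, increasing on $]a_\mu,c_\mu[$ from a negative to a positive value, has a unique zero $b_\mu\in\,]a_\mu,c_\mu[$, with $V<0$ on $]0,b_\mu[$ and $V>0$ on $]b_\mu,1]$, so $L_0\cap\{0\le x\le1\}=\{(x,\pm\sqrt{-2V(x)}):0\le x\le b_\mu\}$ is a loop through $(b_\mu,0)$ and through the origin. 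Since $-V''(0)=g-\mu F'(0)>0$ (recall $F'(0)<0$), the origin is a hyperbolic saddle of $(E)$ whose local invariant manifolds are tangent to $y=\pm\sqrt{g-\mu F'(0)}\,x$, which are the tangent directions of $L_0$ at $0$; and near $x=0$ the orbit on $L_0$ has $|y|$ comparable to $x$, so $dt=dx/|y|$ behaves like $dx/x$ and the origin is reached only in infinite time in both time directions. Hence $L_0$ is the homoclinic orbit to $(0,0)$; it encircles the center $P=(a_\mu,0)$ (since $0<a_\mu<b_\mu$) and excludes the saddle $Q$ (since $b_\mu<c_\mu$).

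Finally, still for $\mu>m^*_1$, take $c\in\,]\mathcal E(P),0[\,=\,]V(a_\mu),V(0)[\,$. The equation $V(x)=c$ has, within $[0,1]$, exactly the roots $x_0\in\,]0,a_\mu[\,$ (on the decreasing branch of $V$) and $x_1\in\,]a_\mu,c_\mu[\,$ (on the increasing branch), with $x_1<b_\mu$ because $V(b_\mu)=0>c$, and no root on $]c_\mu,1]$ because $V>0$ there; thus $0<x_0<a_\mu<x_1<b_\mu$. Consequently $L_c\cap\{0\le x\le1\}=\{(x,\pm\sqrt{2(c-V(x))}):x_0\le x\le x_1\}$ is a Jordan curve surrounding $P$, i.e. a periodic orbit of \eqref{eq-3.2}, traversed once per period; hence its minimal period is $2\int_{x_0}^{x_1}dx/\sqrt{2(c-V(x))}$, and substituting $c=V(x_0)$ rewrites $2(c-V(x))$ as $g(x^2-x_0^2)+2\mu(\mathcal F(x_0)-\mathcal F(x))=\mu\bigl(2(\mathcal F(x_0)-\mathcal F(x))-\tfrac g\mu(x_0^2-x^2)\bigr)$, which gives the stated $\tau(x_0)$ after pulling $\sqrt\mu$ out; the endpoint square--root singularities are integrable since $V'(x_0)\ne0\ne V'(x_1)$. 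I expect the main obstacle to be the sign analysis of $gs-\mu F(s)$ on $[a,1]$ — this is where the strict concavity of $F$ is used and where the order $0<a_\mu<b_\mu<c_\mu<1$ is pinned down — together with the verification that $L_0$ is a genuine homoclinic orbit (the infinite--transit--time estimate at the saddle) and not merely a topological loop; the remaining steps are routine conservative--system bookkeeping.
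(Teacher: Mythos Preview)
Your proof is correct and follows essentially the same route as the paper: both reduce the lemma to the shape of the potential $V(x)=\mathcal E(x,0)$ (the paper writes this as $\mathcal G(x)=2(c-V(x))$ and $\mathcal H(x)=-2V(x)$), use the sign of $gx-\mu F(x)$ to read off monotonicity, locate $b_\mu$ via $V(1)>0$ for $\mu>g/(2\mathcal F(1))=:m^*_1$, and derive the time formulas from $dt=dx/|y|$. You are in fact slightly more careful than the paper in two places---you justify that $L_0$ is a genuine homoclinic (hyperbolic saddle at the origin, logarithmic divergence of the transit time) and you note the integrability of the square--root singularities at the turning points---whereas the paper simply asserts these.
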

\begin{proof}
We start by considering system $(E)$ for $\mu \in \, ]0,m^*_0[\,.$ Let
$L_c$ be a level line passing through a point $w_0 = (p_0,0)$ with $0\leq p_0 < 1.$
By definition, we have
$$y^2 = {\mathcal G}(x):=
g(x^2 - {p_0}^2) - 2\mu \bigl({\mathcal F}(x) -{\mathcal F}({p_0}) \bigr),\quad
x\in [0,1].$$
The function ${\mathcal G}$ is strictly increasing on $[0,1].$ In fact
${\mathcal G}'(x) = 2(g x - \mu F(x)) > 0,$ $\forall\, x\in\,]0,1].$
Since ${\mathcal G}(p_0)=0,$ it turns out that $\sqrt{{\mathcal G}(x)}$ is defined
on $[p_0,1]$ and therefore $L_c$ intersects any vertical line $x=\xi$
(for $p_0 < \xi\leq 1$) exactly at the two points $(\xi,-y_{\xi})$
and $(\xi,y_{\xi})$ with
$y_{\xi} = \sqrt{{\mathcal G}(\xi)}.$ From the relation
$$\frac{x'(t)}{\sqrt{{\mathcal G}(x(t))}} = 1,$$
which is satisfied for any time interval $I$ in which $y(t) = x'(t) >  0,$
we can compute the time $\sigma(p_0,\xi)$ at which a solution of system $(E)$
(starting at the point $w_0$ for $t=0$) reaches the point $(\xi,y_{\xi}).$
Actually, this is the standard time-mapping formula which reads in our case as
$$\sigma(p_0,\xi)=\int_{0}^{\sigma(p_0,\xi)}\frac{x'(t)}{\sqrt{{\mathcal G}(x(t))}}\,dt
= \int_{p_0}^{{\xi}}\frac{d x}{\sqrt{{\mathcal G}(x)}}\,.$$
This proves the first part of our lemma.
\\
We consider now system $(E)$ for $\mu > m^*_0\,.$ In this case, we have two equilibrium
points $P=(a_{\mu},0)$ and $Q=(c_{\mu},0),$ with
$a < a_{\mu} < c_{\mu} < 1.$
The level line $L_0$ passing through the origin is defined by the relation
$$y^2 = {\mathcal H}(x):=g x^2 - 2\mu{\mathcal F}(x),\quad x\in [0,1].$$
The function ${\mathcal H}$
is strictly increasing and positive on $\,]0,a_{\mu}],$ strictly decreasing on
$[a_{\mu},c_{\mu}]$ and strictly increasing on $[c_{\mu},1].$
Hence, ${\mathcal H}(1) = g - 2\mu {\mathcal F}(1) >
g c_{\mu}^2- 2\mu {\mathcal F}(c_{\mu}) = {\mathcal H}(c_{\mu}).$
Since ${\mathcal F}(1) = \int_0^1 F(s)\,ds > 0$
(by assumption $(H_0)$ ) we know that for $\mu$ sufficiently large,
say
$\mu > \frac{g}{2{\mathcal F}(1)}\,,$
the function ${\mathcal H}$ has exactly one zero in $\,]0,c_{\mu}[$
and such zero, denoted by $b_{\mu}\,,$ is contained in $\,]a_{\mu},c_{\mu}[\,.$
Thus, if we define $m^*_1$ as
$$m^*_1:= \frac{g}{2\int_0^1 F(s)\,ds}\,,$$
we have that for every $\mu > m^*_1,$ the level
line $L_0$ is a close curve (the locus of a homoclinic trajectory of $(E)$ )
intersecting the abscissa at the point $(b_{\mu},0),$ with $a_{\mu} < b_{\mu} < c_{\mu}\,.$
A similar argument shows that, for every $c \in \, ]{\mathcal E}(P),0[\,,$
the level line $L_c$ is a closed curve surrounding the equilibrium point $P$
and, as a matter of fact, it is the locus of a periodic orbit.
\\
In this case, the energy relation reads as
$$y^2 = 2 c + {\mathcal H}(x),\quad x\in [0,1].$$
The equation ${\mathcal H}(x) = - 2c,$ with $c\in \, ]{\mathcal E}(P),0[\,$
and $x\in \,]0,1[$ has exactly two solutions, $x_0$ and $x_1\,,$ with
$$0 < x_0 < a_{\mu} < x_1 < b_{\mu}\,.$$
We rewrite now the energy relation as
$$y^2 = - {\mathcal H}(x_0) + {\mathcal H}(x) = \mu \bigl( 2
({\mathcal F}(x_0) - {\mathcal F}(x)) - \frac g {\mu} (x_0^2 - x^2) \bigr ).$$
Arguing like in the first part of this proof (the case of $\mu$ small)
we can compute again the time-mapping related to the orbit $L_c\,.$
We find that the fundamental period $\tau(x_0)$ of such orbit is twice the time
to move along $L_c$ from $(x_0,0)$ to $(x_1,0)$
in the upper half plane $y = x'> 0.$ Thus, integrating
$$\frac{x'(t)}{\sqrt{- {\mathcal H}(x_0) + {\mathcal H}(x(t))}}\,=1,$$
we easily obtain the expression for $\tau(x_0).$ This concludes the proof.
\end{proof}

\begin{remark}\label{rem-3.1}
The choice of $m^*_1$ given in the proof of Lemma \ref{lem-3.1}
is not the optimal one. In fact, one could define
$m^*_1$ as the infimum of the $\mu > m^*_0$ such that the level line $L_0$
intersects the abscissa at some point in $\,]a_{\mu},c_{\mu}[\,.$
\end{remark}

\medskip
The next two lemmas are stated without the corresponding proofs which can be
inferred from Lemma \ref{lem-3.1} via elementary considerations.

\begin{lemma}\label{lem-3.2}
Let us define
$$\Lambda:= \sup_{0 < s\leq 1} \frac{F(s)}{s}\,,\quad
\Theta:= \sup_{0 < s\leq 1} \frac{-F(s)}{s}\,.$$ Suppose that $\mu
\in \,]0,m^*_0[\,.$ Then, for every $p_0$ with $0 < p_0 < 1$ and
$\xi$ with $p_0 < \xi \leq 1$ we have
$$
\frac{1}{\sqrt{g + \mu\,\Theta}}\,{\cosh}^{-1}(\tfrac{\xi}{p_0})\leq \sigma(p_0,\xi)
\leq \frac{1}{\sqrt{g - \mu\,\Lambda}}\,{\cosh}^{-1}(\tfrac{\xi}{p_0}).
$$
\end{lemma}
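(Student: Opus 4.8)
The plan is to start from the time-mapping formula established in Lemma \ref{lem-3.1}, namely
$$\sigma(p_0,\xi) = \int_{p_0}^{\xi} \frac{ds}{\sqrt{{\mathcal G}(s)}}, \qquad {\mathcal G}(s):= g(s^2 - p_0^2) - 2\mu\bigl({\mathcal F}(s) - {\mathcal F}(p_0)\bigr),$$
and to bound the integrand by comparing ${\mathcal G}$ with suitable constant multiples of $s^2 - p_0^2$ on the interval $[p_0,\xi]\subseteq\,]0,1]$. Writing ${\mathcal F}(s) - {\mathcal F}(p_0) = \int_{p_0}^s F(u)\,du$ and using the very definitions of $\Lambda$ and $\Theta$, which give $-\Theta\,u \le F(u)\le \Lambda\,u$ for all $u\in\,]0,1]$, I would integrate these pointwise inequalities over $[p_0,s]$ to obtain
$$-\tfrac{\Theta}{2}(s^2 - p_0^2)\;\le\;{\mathcal F}(s) - {\mathcal F}(p_0)\;\le\;\tfrac{\Lambda}{2}(s^2 - p_0^2),\qquad \forall\, s\in[p_0,1].$$

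Substituting these into the expression for ${\mathcal G}$ then yields
$$(g - \mu\Lambda)(s^2 - p_0^2)\;\le\;{\mathcal G}(s)\;\le\;(g + \mu\Theta)(s^2 - p_0^2),\qquad\forall\, s\in[p_0,1].$$
At this point one must observe that $\Lambda = \sup_{0<s\le1}F(s)/s$ in fact equals $\max_{s\in[a,1]}F(s)/s$ (since $F(s)/s<0$ on $\,]0,a[$), so it is precisely the quantity appearing in the definition \eqref{eq-1.m} of $m^*_0$; hence the hypothesis $\mu\in\,]0,m^*_0[$ forces $g - \mu\Lambda>0$, all three members above are strictly positive for $p_0<s\le1$, and the square roots are well defined. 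Passing to reciprocals of the square roots reverses the inequalities, giving
$$\frac{1}{\sqrt{g+\mu\Theta}}\cdot\frac{1}{\sqrt{s^2-p_0^2}}\;\le\;\frac{1}{\sqrt{{\mathcal G}(s)}}\;\le\;\frac{1}{\sqrt{g-\mu\Lambda}}\cdot\frac{1}{\sqrt{s^2-p_0^2}}.$$

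To finish, I would integrate these inequalities over $s\in[p_0,\xi]$, pulling the constant factors $(g+\mu\Theta)^{-1/2}$ and $(g-\mu\Lambda)^{-1/2}$ out of the integrals, and use the elementary antiderivative $\int_{p_0}^{\xi}(s^2-p_0^2)^{-1/2}\,ds = \cosh^{-1}(\xi/p_0)$, which vanishes at $\xi=p_0$, as it must. This produces exactly the two-sided bound in the statement. No genuine difficulty arises beyond bookkeeping: the only delicate points are the correct sign handling near $s=0$ (so that $\Theta$, built from $-F(s)/s$, governs the upper estimate for ${\mathcal G}$ and hence the lower bound for $\sigma$) and the identification of $\Lambda$ with the constant in \eqref{eq-1.m}, which is what keeps $g-\mu\Lambda$ strictly positive over the whole parameter range $\mu\in\,]0,m^*_0[$.
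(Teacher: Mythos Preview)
Your proof is correct and is precisely the elementary argument the paper has in mind: the paper does not actually write out a proof of this lemma, stating only that it ``can be inferred from Lemma \ref{lem-3.1} via elementary considerations.'' Your bounding of ${\mathcal F}(s)-{\mathcal F}(p_0)$ via the pointwise inequalities $-\Theta\,u\le F(u)\le \Lambda\,u$, the identification of $\Lambda$ with the constant in \eqref{eq-1.m} (so that $g-\mu\Lambda>0$), and the evaluation $\int_{p_0}^{\xi}(s^2-p_0^2)^{-1/2}\,ds=\cosh^{-1}(\xi/p_0)$ together supply exactly those considerations.
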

\noindent
Note that for $F$ like in \eqref{eq-3.3} it holds that $\Theta = |F'(0)|.$
Moreover, $g - \mu\Lambda > 0$ for $\mu < m^*_0\,.$

\begin{lemma}\label{lem-3.3}
Assume $(H_0)$ and let $\mu > m^*_1$ (with $m^*_1$ as in Lemma \ref{lem-3.1}).
Then, for every $x_0$ with
$$0 < x_0 < a,$$
it follows that
$$
\lim_{\mu\to +\infty}\tau(x_0) {\sqrt{\mu}} = \sqrt{2}\, \int_{x_0}^{x_0^+} \frac{ds}
{\sqrt{{\mathcal F}(x_0) - {\mathcal F}(s)}}\,,
$$
where $x_0^+ \in\, ]a,1[$ is such that
$$
\int_{x_0}^{x_0^+} F(s)\,ds = 0.
$$
\end{lemma}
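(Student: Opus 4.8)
The plan is to pass to the limit $\mu\to+\infty$ directly in the period formula from Lemma \ref{lem-3.1}. Since $-\tfrac{g}{\mu}(x_0^2-s^2)=\tfrac{g}{\mu}(s^2-x_0^2)\ge 0$ for $s\ge x_0$, that formula can be rewritten as
$$
\tau(x_0)\sqrt{\mu}=2\int_{x_0}^{x_1(\mu)}\frac{ds}{\sqrt{\Psi(s)+\tfrac{g}{\mu}(s^2-x_0^2)}},
\qquad
\Psi(s):=2\bigl(\mathcal F(x_0)-\mathcal F(s)\bigr),
$$
where $x_0$ is the left turning point (note $x_0<a<a_\mu$) and $x_1(\mu)$ the right one of the closed orbit through $(x_0,0)$. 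For $\mu$ large the level $c=\mathcal E^{\mu}(x_0,0)=-\tfrac{g}{2}x_0^2+\mu\mathcal F(x_0)$ lies in $\,]\mathcal E(P),0[\,$: indeed $\mathcal F(x_0)<0$ forces $c<0$, while $c-\mathcal E(P)=\tfrac{g}{2}(a_\mu^2-x_0^2)+\mu\bigl(\mathcal F(x_0)-\mathcal F(a_\mu)\bigr)\to+\infty$ because $\mathcal F(x_0)-\mathcal F(a_\mu)\to\mathcal F(x_0)-\mathcal F(a)>0$; hence Lemma \ref{lem-3.1} applies and the orbit is periodic with period $\tau(x_0)$. Since $\mathcal F$ is strictly decreasing on $[0,a]$ and strictly increasing on $[a,1]$ with $\mathcal F(x_0^+)=\mathcal F(x_0)$, we have $\Psi>0$ on $\,]x_0,x_0^+[\,$, $\Psi(x_0^+)=0$, and $\Psi<0$ with $\Psi'=-2F<0$ on $\,]x_0^+,1[\,$. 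As $\Psi(s)+\tfrac{g}{\mu}(s^2-x_0^2)>0$ on $\,]x_0,x_0^+]\,$ whereas it vanishes at $x_1(\mu)$, this gives $x_1(\mu)>x_0^+$ for every admissible $\mu$; and from $\Psi\bigl(x_1(\mu)\bigr)=-\tfrac{g}{\mu}\bigl(x_1(\mu)^2-x_0^2\bigr)\to0^-$ together with the strict monotonicity of $\Psi$ to the right of $x_0^+$ we get $x_1(\mu)\downarrow x_0^+$ as $\mu\to+\infty$.

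Next I would split the integral at $x_0^+$. On $[x_0,x_0^+]$ the integrand converges pointwise to $1/\sqrt{\Psi(s)}$ and, because $\tfrac{g}{\mu}(s^2-x_0^2)\ge0$, it is bounded above by $1/\sqrt{\Psi(s)}$ uniformly in $\mu$; this dominating function is integrable, since near $s=x_0$ one has $\Psi(s)\sim-2F(x_0)(s-x_0)$ with $-F(x_0)>0$ (as $0<x_0<a$), and near $s=x_0^+$ one has $\Psi(s)\sim 2F(x_0^+)(x_0^+-s)$ with $F(x_0^+)>0$ (as $a<x_0^+<1$), so both endpoint singularities are of order $|\,\cdot\,|^{-1/2}$. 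By dominated convergence,
$$
2\int_{x_0}^{x_0^+}\frac{ds}{\sqrt{\Psi(s)+\tfrac{g}{\mu}(s^2-x_0^2)}}\;\longrightarrow\;
2\int_{x_0}^{x_0^+}\frac{ds}{\sqrt{\Psi(s)}}=\sqrt{2}\int_{x_0}^{x_0^+}\frac{ds}{\sqrt{\mathcal F(x_0)-\mathcal F(s)}},
$$
which is exactly the asserted limit.

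It then remains to check that the leftover piece $\displaystyle\int_{x_0^+}^{x_1(\mu)}\bigl(\Psi(s)+\tfrac{g}{\mu}(s^2-x_0^2)\bigr)^{-1/2}ds$ tends to $0$. This is the only delicate point, because the interval $[x_0^+,x_1(\mu)]$ shrinks to a point while the integrand blows up at the moving endpoint $x_1(\mu)$. I would control it by a uniform linearization of $\Phi_\mu(s):=\Psi(s)+\tfrac{g}{\mu}(s^2-x_0^2)$ at its zero $x_1(\mu)$: one has $\Phi_\mu'\bigl(x_1(\mu)\bigr)=-2F\bigl(x_1(\mu)\bigr)+\tfrac{2g}{\mu}x_1(\mu)\to-2F(x_0^+)<0$, while $\Phi_\mu''(s)=-2F'(s)+\tfrac{2g}{\mu}$ is bounded uniformly for $s\in[0,1]$, $\mu\ge1$. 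Hence, by Taylor's formula, there are constants $\eta>0$ and $c_0>0$ independent of the (large) $\mu$ with $\Phi_\mu(s)\ge c_0\,\bigl(x_1(\mu)-s\bigr)$ whenever $0\le x_1(\mu)-s\le\eta$; since $x_1(\mu)-x_0^+\to0$, eventually the whole interval $[x_0^+,x_1(\mu)]$ lies within distance $\eta$ of $x_1(\mu)$, so that
$$
\int_{x_0^+}^{x_1(\mu)}\frac{ds}{\sqrt{\Phi_\mu(s)}}\le\frac{1}{\sqrt{c_0}}\int_{x_0^+}^{x_1(\mu)}\frac{ds}{\sqrt{x_1(\mu)-s}}=\frac{2}{\sqrt{c_0}}\sqrt{x_1(\mu)-x_0^+}\;\longrightarrow\;0.
$$
Adding the two contributions yields $\lim_{\mu\to+\infty}\tau(x_0)\sqrt{\mu}=\sqrt{2}\int_{x_0}^{x_0^+}\bigl(\mathcal F(x_0)-\mathcal F(s)\bigr)^{-1/2}ds$, as claimed. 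The main obstacle is precisely the uniform‑in‑$\mu$ desingularization near the receding turning point $x_1(\mu)$ used in the last step; the rest is the routine dominated‑convergence computation anticipated by the remark that Lemma \ref{lem-3.3} follows from Lemma \ref{lem-3.1} by elementary considerations.
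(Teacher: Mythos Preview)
Your argument is correct, and since the paper explicitly omits the proof of this lemma (stating only that it ``can be inferred from Lemma~\ref{lem-3.1} via elementary considerations''), there is no alternative approach to compare against. Your dominated-convergence argument on $[x_0,x_0^+]$ together with the uniform Taylor estimate near the moving turning point $x_1(\mu)$ is precisely the kind of elementary analysis the authors have in mind, and it fills the gap cleanly.
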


\bigskip

At this point, we come back to the non-autonomous system
\eqref{eq-3.2} and define the function $n(t)$ as in
\cite{ChBe-94}. Accordingly, we set
\begin{equation}\label{eq-3nt}
n(t):=\; \left\{
\begin{array}{ll}
n_1\,,\quad \mbox{for } \, 0\leq t < \alpha,\\
n_0\,,\quad \mbox{for } \, \alpha\leq t < \beta,
\end{array}
\right.
\end{equation}
with
$$0 < n_0 < n_1\,.$$
In the sequel we treat $n_0$ as a small number and $n_1$ as a
large one. More precisely, we shall always assume
$0 < n_0 < m^*_{0} < m^*_{1} < n_1\,.
$
However, some further conditions on $n_0$ and $n_1$ will be required when needed.

\medskip
In order to study equations \eqref{eq-3.1} and \eqref{eq-3.2} we analyze the system
$$
\left\{
\begin{array}{ll}
x' = y\\
y' = g x - n_i\, F(x),
\end{array}
\right.
\qquad i= 0, 1
\leqno{(E_i)}
$$
and follow its trajectories for $i=1$ along a time interval of length $\alpha$
and for $i=0$ along a time interval of length $\beta - \alpha.$
Clearly, $(E_0)$ and $(E_1)$ are nothing but two aspects of the previously studied
system $(E)$ with $\mu = n_0$ or $\mu = n_1\,,$ respectively.
Also the energy function ${\mathcal E} = {\mathcal E}^{\mu}$ defined in
\eqref{eq-3.en} takes two different (but similar) forms that we denote by
${\mathcal E}_0$ and ${\mathcal E}_1\,,$ i.e.,
$$
{\mathcal E}_i(x,y) = {\mathcal E}^{n_i}(x,y)= \frac{1}{2} y^2 - g
\frac{1}{2} x^2 + n_i {\mathcal F}(x),\quad i = 0,1.
$$
The corresponding level lines $L^i_c$ (contained in the strip $[0,1]\times{\mathbb R}$)
are defined consequently.

Let us fix a value
$$c \in\,]{\mathcal E}_1(a_{n_1},0),0[$$
and consider the compact annular region
$${\mathcal M}^1_c:= \{(x,y): 0\leq x \leq 1, \;  c\leq {\mathcal E}_1(x,y) \leq 0\}$$
surrounding the point
$$P_1:= (a_{n_1},0).$$
The set ${\mathcal M}^1_c$ is invariant for the dynamical system generated by $(E_1)$
and, therefore, we can use the Pr\"{u}fer transformation and express the solutions
of $(E_1)$ with initial value in ${\mathcal M}^1_c$ using polar coordinates with center
at $P_1\,.$ Accordingly, we define by $\theta(t,z)$ and $\rho(t,z)$ the angular and the radial
coordinates of the solution  $(x(t),y(t))$
of $(E_1)$ with $(x(0),y(0)) = z\in {\mathcal M}^1_c\,.$

\bigskip
For each point $z$ in the set
$${\mathcal N}_c:= \{(x,y): a_{n_1}\leq x \leq 1, \; y \geq 0,\; c
\leq {\mathcal E}_1(x,y) \leq 0\}
={\mathcal M}^1_c\cap (\, [a_{n_1},+\infty)\times[0,+\infty)\,)$$
we can fix the angular coordinate in order to have
$$\theta(0,z)\in [0,\tfrac{\pi}{2}].$$

The next result rephrases the conclusions from Lemma \ref{lem-3.1}
and Lemma \ref{lem-3.3} in terms of the angular coordinate.
Indeed, we have:

\begin{lemma}\label{lem-3.5}
Assume $(H_0)$ and let $n_1  > m^*_1\,.$ Then
the following properties hold:
\begin{itemize}
\item{}
$\theta(\alpha,z) > - \pi, \quad \forall  z\in
L_0\cap {\mathcal N}_c\,;$
\item{} for each $\varepsilon > 0,$ for each $k\in
{\mathbb N}$ and for each $c\in \,[-\varepsilon,0[\,,$ there is
$n^*= n^*_{k,c}\,,$ such that for every $n_1 \geq n^*,$
$\theta(\alpha,z) < -\tfrac{\pi}{2} - 2k\pi,\quad
\forall  z\in L_c\cap {\mathcal N}_c\,.$
\end{itemize}
\end{lemma}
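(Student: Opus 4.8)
For the first bullet, the plan is to exploit the homoclinic geometry provided by Lemma~\ref{lem-3.1}: under $(H_0)$ and $n_1>m^*_1$, $L_0$ is the loop of a trajectory homoclinic to the origin. Indeed $(0,0)$ is an equilibrium of $(E_1)$ (since $F(0)=0$), it lies on $L_0$ (since ${\mathcal E}_1(0,0)=0$), and it is the only equilibrium of $(E_1)$ in ${\mathcal M}^1_c$ besides the center $P_1$ — the saddle $Q=(c_{n_1},0)$ has ${\mathcal E}_1(Q)=-\tfrac12(gc_{n_1}^2-2n_1{\mathcal F}(c_{n_1}))>0$ because $b_{n_1}<c_{n_1}$. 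In the Pr\"{u}fer coordinate centered at $P_1$, with $u=x-a_{n_1}$ and $v=y$,
$$\dot\theta=\frac{u\,(gx-n_1F(x))-v^{2}}{u^{2}+v^{2}}\ \le\ -\frac{v^{2}}{u^{2}+v^{2}}\ \le\ 0,$$
since $gx-n_1F(x)$ is positive on $\,]0,a_{n_1}[\,$ and negative on $\,]a_{n_1},c_{n_1}[\,$ (as $P_1$ is a center), so $u\,(gx-n_1F(x))\le 0$ throughout ${\mathcal M}^1_c$, and $\dot\theta<0$ off $P_1$ and $(0,0)$. Hence, for $z\in L_0\cap{\mathcal N}_c$ the trajectory through $z$ remains on $L_0\setminus\{(0,0)\}$, $\theta(\cdot,z)$ is strictly decreasing, and it cannot attain the value $-\pi$ in finite time, because the only point of $L_0$ on the ray $\{(x,0):x<a_{n_1}\}$ (which carries the angle $\pi\equiv-\pi$ from $P_1$) is $(0,0)$, approached only asymptotically. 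Since moreover $\theta(0,z)\in[0,\tfrac\pi2]$, this yields $\theta(\alpha,z)>-\pi$ for every $\alpha>0$.

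For the second bullet, fix $\varepsilon>0$, $k\in{\mathbb N}$ and $c\in[-\varepsilon,0[\,$. Because $a_{n_1}\to a^+$ and ${\mathcal F}(a)=\int_0^aF<0$, we have ${\mathcal E}_1(P_1)=-\tfrac g2a_{n_1}^2+n_1{\mathcal F}(a_{n_1})\to-\infty$, so for $n_1$ large $L_c$ is a periodic orbit of $(E_1)$ around $P_1$, with the minimal period $\tau$ of Lemma~\ref{lem-3.1} and turning points $0<x_0<a_{n_1}<x_1<b_{n_1}$ ($x_1$ eventually lying in a fixed compact subinterval of $]a,1[$). The heart of the matter is $\tau\to0$ as $n_1\to+\infty$. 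Using the turning-point identity $gx_0^2-2n_1{\mathcal F}(x_0)=-2c$, the period formula of Lemma~\ref{lem-3.1} simplifies to $\tau=2\int_{x_0}^{x_1}(gx^2-2n_1{\mathcal F}(x)+2c)^{-1/2}\,dx$; for a small fixed $\delta>0$ the contribution of $[\delta,x_1]$ is $O(n_1^{-1/2})$ (the radicand is $\gtrsim n_1$ there, except just below $x_1$ where it is $\gtrsim n_1(x_1-x)$), while on $[x_0,\delta]$ one uses ${\mathcal F}(x)\sim-\tfrac a2x^2$, hence $x_0\sim\sqrt{2(-c)/(n_1a)}$ and radicand $\ge\tfrac{n_1a}{2}(x^2-x_0^2)$, to bound that piece by $\sqrt{2/(n_1a)}\,\cosh^{-1}(\delta/x_0)=O(n_1^{-1/2}\log n_1)$; altogether $\tau=O(n_1^{-1/2}\log n_1)\to0$. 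Now choose $n^*_{k,c}$ so that $\tau<\alpha/(k+1)$ whenever $n_1\ge n^*_{k,c}$: then every trajectory starting at $z\in L_c\cap{\mathcal N}_c$ completes more than $k+1$ clockwise turns around $P_1$ on $[0,\alpha]$, so $\theta$ decreases by more than $2\pi(k+1)$, and since $\theta(0,z)\le\tfrac\pi2$ we obtain $\theta(\alpha,z)<\tfrac\pi2-2\pi(k+1)<-\tfrac\pi2-2k\pi$.

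The qualitative ingredients — strict monotonicity of $\theta$, the barrier at $-\pi$ forced by the homoclinic loop, and the revolution count once the period is known to be small — are routine. The one genuinely technical step, which I expect to be the main obstacle, is the estimate $\tau\to0$, and above all the control of the period integrand near the turning point $x_0$, which itself contracts to $0$ like $n_1^{-1/2}$; an equivalent route is to rescale time by $\sqrt{n_1}$, turning $(E_1)$ into $\ddot x=(g/n_1)x-F(x)$, a perturbation of $\ddot x=-F(x)$ whose relevant orbit lies close to the homoclinic loop of the limit equation and hence has period $O(\log n_1)$, yielding again $\tau=O(n_1^{-1/2}\log n_1)$.
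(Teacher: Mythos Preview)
Your proof is correct and follows the same two-step strategy as the paper: the homoclinic geometry of $L_0$ for the first bullet, and the vanishing of the period of $L_c$ for the second. The paper's own argument is much terser --- for the second bullet it simply invokes Lemma~\ref{lem-3.3} to assert $\tau\to 0$ --- whereas you supply a direct estimate $\tau=O(n_1^{-1/2}\log n_1)$ by splitting the period integral near the two turning points; this is a genuine gain in rigor, since Lemma~\ref{lem-3.3} is stated for \emph{fixed} $x_0$, while here $c$ is fixed and the left turning point $x_0\sim\sqrt{-2c/(n_1 a)}\to 0$ as $n_1\to\infty$, so the pointwise limit of that lemma (which diverges as $x_0\to 0^+$) does not apply without the additional work you provide.
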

\begin{proof}
Assumption $(H_0)$ and the choice $n_1 > m^*_1$ ensure that the level line $L_0$ is homoclinic to the origin
for system $(E_1)$ (see Lemma \ref{lem-3.1}).
Thus, if we take $z\in L_0\cap {\mathcal N}_c\,,$ the solution $(x(t),y(t))$
departing from $z$ moves along $L_0$ in the clockwise sense and cannot reach the origin at any finite time.
Accordingly, as long as we run system $(E_1)$ we have $\theta(t,z) > - \pi$, for every $t\geq 0$.
This proves the first assertion of the lemma.
On the other hand, if $z\in L_c\cap {\mathcal N}_c\,,$ for $c < 0$ (in particular $z\not\in L_0$)
we know that $z$ belongs to a closed orbit of system $(E_1)$ whose fundamental period $\tau(z)$ tends to zero
as $\mu=n_1$ grows to infinity (see Lemma \ref{lem-3.3}). As a consequence, during a time interval
of length $\alpha > 0,$ the trajectory will make at least
$\displaystyle{\lfloor \tfrac{\alpha}{\tau(z)}\rfloor}$ turns around $P_1$. From this fact, the
second assertion of the lemma easily follows.
\end{proof}

We have now at hand almost all the needed tools to construct a domain
containing a topological horseshoe. First, however, we need to prove another
technical result.

\bigskip
Suppose that $n_0 > 0$ is a given constant satisfying
$$n_0 < m^*_0\,.$$
We consider system $(E_0)$ and look at the orbits of such a system passing through a
point $(p_0,0)$ with $0 < p_0 < 1.$ Recall also the definition of the function
$\sigma$ in Lemma \ref{lem-3.1} that we call now
$$
\sigma_0(p_0,\xi):= \int_{p_0}^{\xi} \frac{ds}
{\sqrt{g(s^2 - p_0^2) - 2n_0 ({\mathcal F}(s) - {\mathcal F}(p_0))}}\,,\quad \xi \in \, ]p_0,1].
$$
The level line $L^0_c$ ($c:= {\mathcal E}_0(p_0,0)$) crosses the vertical line $x=\xi$
exactly at two points and $\sigma_0(p_0,\xi)$ is half of the time needed to move
from one of these intersection points to the other along the orbit of $(E_0)$
and passing through $(p_0,0).$

\medskip

First of all, we observe that there is a $\check{p}_0\in \,]0,a[$ such that
\begin{equation}\label{eq-3pck}
\sigma_0(p_0,a)> \frac{\beta-\alpha}{2}\,,\quad
\mbox{for every } p_0\in \,]0,\check{p}_0].
\end{equation}

\medskip
Next, assuming $(H_0)$,
we consider system $(E_1)$ for a general $n_1 > m^*_1$ and
consider a level line $L^1_c$ ($c:= {\mathcal E}_1(p_0,0)$)
passing through a point $(p_0,0)$ with
$$0 < p_0 < a.$$
We denote by $(p_1,0)$ with
$$a_{n_1} < p_1 < b_{n_1}$$
the other
intersection of such a level line with the $x$-axis. Note that
$p_1$ depends by $n_1$ even if, for sake of simplicity in the
notation, we do not make this fact explicit.

We claim that there exist $\hat{p}_0\in \,]0,a[$ and $m^*_2 > m^*_1$
such that if we choose
$$p_0 \in  \, ]0,\hat{p}_0[$$
and
$$n_1 > m^*_2\,,$$
then
\begin{equation}\label{eq-3phat}
\sigma_0(p_1,b_{n_1})< \frac{\beta-\alpha}{2}\,.
\end{equation}
To prove this claim, we need to introduce some further notation.
By the sign conditions on $F(s)$ and $(H_0)$, we know that there exists
a (unique) value
$$b \in\, ]a,1[$$
such that
$$\int_0^b F(s)\, ds = 0.$$
With this position we have that
$$b_{\mu} > b, \quad\forall\, \mu > m^*_1$$
and $b_{\mu} \to b$ as $\mu\to+\infty$ with the function $\mu\to b_{\mu}$
decreasing on $\,]m^*_1,+\infty).$ Thus, if we fix a value
$$\bar{\mu} > m^*_1\,,$$
we can conclude that
$$b \leq b_{\mu} \leq b_{\bar{\mu}}\,,\quad\forall\, \mu \geq \bar{\mu}\,.$$
Hence, we can define
$$\eta:= \min_{s\in [b,b_{\bar{\mu}}]}F(s) > 0.$$
Let us take $n_1 = \mu \geq \bar{\mu}$ and a point $p_0\in \, ]0,a[$
which defines a corresponding point $p_1$ as described above.
The proof of our claim starts now.

As a consequence of Lemma \ref{lem-3.2} it will be sufficient to prove
that
$$\frac{b_{n_1}}{p_1} < \cosh\Bigl( \frac{\beta - \alpha}{2} \, \sqrt{g - n_0 \Lambda }\,
\Bigr ):= \kappa,$$
that is, we want to prove the inequality
\begin{equation}\label{eq-3k}
p_1 > \frac{b_{n_1}}{\kappa}\,.
\end{equation}
If $\tfrac{b_{n_1}}{\kappa} \leq a_{n_1}$ we are done. Hence,
it is not restrictive if, from now on, we suppose the opposite inequality.
As a consequence we have that \eqref{eq-3k} is satisfied if and only if the level line
passing through $( \tfrac{b_{n_1}}{\kappa},0)$ has energy less than ${\mathcal E}_1(p_0,0).$

To this aim, we first prove that there is a lower bound $\delta_0$ (depending on $n_1$) such that
\begin{equation}\label{eq-3delta}
\frac{g}{2}\,\bigl(\frac{b_{n_1}}{\kappa}\bigr)^2 - n_1
{\mathcal F}\bigl(\frac{b_{n_1}}{\kappa}\bigr)
\geq \delta_0 > 0.
\end{equation}
Using the fact that ${\mathcal E}_1(b_{n_1},0) = 0$ (by definition of $b_{n_1}$),
the last assertion is equivalent to
$$n_1\,\left(
{\mathcal F}\bigl({b_{n_1}}\bigr) -
{\mathcal F}\bigl(\frac{b_{n_1}}{\kappa}\bigr)\,\right )\,
- \, \frac{g}{2} \, b_{n_1}^2 \,(1 - \tfrac{1}{{\kappa}^2})
\geq \delta_0\,.$$
We distinguish two cases, according to the fact that
$\tfrac{b_{n_1}}{\kappa} > b$ or $\tfrac{b_{n_1}}{\kappa} \leq b.$

In the former case, we can write
\begin{eqnarray*}
n_1\,\left(
{\mathcal F}\bigl({b_{n_1}}\bigr) -
{\mathcal F}\bigl(\frac{b_{n_1}}{\kappa}\bigr)\,\right )\,
- \, \frac{g}{2} \, b_{n_1}^2 \,(1 - \tfrac{1}{{\kappa}^2})
&\geq&
n_1\,\eta \, b_{n_1} \frac{\kappa - 1}{\kappa}
-  \, \frac{g}{2} \, b_{n_1}^2 \,(1 - \tfrac{1}{{\kappa}^2})\\
&\geq&
n_1\,\eta \, b \frac{\kappa - 1}{\kappa}
-  \, \frac{g}{2} \,(1 - \tfrac{1}{{\kappa}^2}):= \delta_0\,,
\end{eqnarray*}
with $\delta_0 > 0$ provided that
$$n_1 > {\mu}^*:= \frac{g(\kappa + 1)}{2 \eta b}\,.$$
In the latter case, we have

\begin{eqnarray*}
n_1\,\left(
{\mathcal F}\bigl({b_{n_1}}\bigr) -
{\mathcal F}\bigl(\frac{b_{n_1}}{\kappa}\bigr)\,\right )\,
- \, \frac{g}{2} \, b_{n_1}^2 \,(1 - \tfrac{1}{{\kappa}^2})
&\geq&
n_1\,
{\mathcal F}\bigl({b_{n_1}}\bigr)
- \, \frac{g}{2} \, b_{n_1}^2 \,(1 - \tfrac{1}{{\kappa}^2})\\
&=&
n_1\,\left(
{\mathcal F}\bigl({b_{n_1}}\bigr) -
{\mathcal F}(b)\,\right )\,
- \, \frac{g}{2} \, b_{n_1}^2 \,(1 - \tfrac{1}{{\kappa}^2})\\
&\geq& n_1\,\eta \, b \frac{\kappa - 1}{\kappa} -  \, \frac{g}{2}
\,(1 - \tfrac{1}{{\kappa}^2})= \delta_0\,.
\end{eqnarray*}

Thus, it remains to find $p_0$ such that ${\mathcal E}_1(p_0,0) \in \,]-\delta_0,0[\,.$
To check this fact, it is equivalent to prove that
$$-{\mathcal E}_1(p_0,0) = - n_1 {\mathcal F}(p_0) +\frac{g}{2}\,p_0^2
< n_1\,\eta \, b \frac{\kappa - 1}{\kappa}
-  \, \frac{g}{2} \,(1 - \tfrac{1}{{\kappa}^2}),$$
that is,
\begin{equation}\label{eq-3p0}
n_1 \Bigl(\, \eta \, b \frac{\kappa - 1}{\kappa} + {\mathcal F}(p_0) \,\Bigr)
>
\frac{g}{2}\Bigl(\, p_0^2 + \tfrac{{\kappa}^2 - 1}{{\kappa}^2}\,\Bigr).
\end{equation}
We define $\hat{p}_0\,\in\,]0,a[$ as the solution of equation
$${\mathcal F}(s) = - \eta \, b \frac{\kappa - 1}{2\kappa}$$
and take $p_0$ such that
$$0 < p_0 \leq \hat{p}_0\,.$$
For $n_1$ satisfying
$$n_1 > \tilde{\mu}:= \frac{g({\kappa}^2(a^2 +1) -1)}{{\kappa}({\kappa} -1)b \eta}$$
we obtain
$$n_1 \Bigl(\, \eta \, b \frac{\kappa - 1}{\kappa} + {\mathcal F}(p_0) \,\Bigr)
\geq
n_1 \,\eta \, b \frac{\kappa - 1}{2\kappa}
>
\frac{g}{2}\Bigl(\, a^2 + \tfrac{{\kappa}^2 - 1}{{\kappa}^2}\,\Bigr)
$$
and \eqref{eq-3p0} is proved.
In this manner, we have found $\hat{p}_0$ and our claim is proved for
$$n_1 > m^*_2:= \max\{{\mu}^*, \tilde{\mu}\}.$$

We are now in position to construct a suitable domain including a topological horseshoe.
This is discussed in the next section.

\section{Proof of the main result}\label{sec-4}
Assume $(H_0)$ and let $n(t)$ be like in \eqref{eq-3nt}, with
$$0 < n_0 < m^*_0 < m^*_2 < n_1\,.$$
Let us fix also a number
$${\bar{p}}_0 \in\, ]0,p^*[\,,\quad \mbox{with } \; p^*:= \min \{{\check{p}_0}, {\hat{p}_0}\}$$
and consider the compact annular region
$${\mathcal M}^1_c:= \{(x,y): 0\leq x \leq 1, \;  c\leq {\mathcal E}_1(x,y) \leq 0\},\;\;
\mbox{ for } c= {\mathcal E}_1({\bar{p}}_0,0).$$
As in Section \ref{sec-3} (see, in particular, Lemma \ref{lem-3.5})
we consider also the set
$${\mathcal N}_c:
={\mathcal M}^1_c\cap (\, [a_{n_1},+\infty)\times[0,+\infty)\,).$$
Now we introduce a polar coordinate system with center at $P_1 = (a_{n_1},0)$
and consider the following compact subsets ${\mathcal N}'_c$ and
${\mathcal N}''_c$ defined by
$${\mathcal N}'_c:= \{z\in {\mathcal N}_c\,:\, \theta(\alpha,z)\in [-\frac{5\pi}{2},-2\pi]\,\}$$
and
$${\mathcal N}''_c:= \{z\in {\mathcal N}_c\,:\, \theta(\alpha,z)\in [-\frac{9\pi}{2},-4\pi]\,\},$$
respectively.
\\
Notice that a point $z$ belongs to ${\mathcal N}'_c$ if and only if the solution $(x(t),y(t))$
of $(E_1)$ with $(x(0),y(0)) = z\in {\mathcal N}_c$ is such that
\begin{equation}\label{eq-4.1}
(x(\alpha),y(\alpha))\in
{\mathcal M}^1_c\cap (\, [a_{n_1},+\infty)\times(-\infty,0]\,)
\end{equation}
and $x(\cdot)$ has exactly two strict maximum points separated by one
strict minimum point along the time interval
$[0,\alpha],$ that is the trajectory crosses the $x-$axis exactly three times.
On the other hand,
$z$ belongs to ${\mathcal N}''_c$ if and only if the solution $(x(t),y(t))$
of $(E_1)$ with $(x(0),y(0)) = z\in {\mathcal N}_c$ satisfies \eqref{eq-4.1}
and $x(\cdot)$ has exactly three strict maximum points separated by two
strict minimum points along the time interval
$[0,\alpha],$ that is the trajectory crosses the $x-$axis exactly five times.

\begin{remark}\label{rem-4.1}
For sake of simplicity in the exposition we have confined ourselves to the
construction of only two sets ${\mathcal N}'_c$ and ${\mathcal N}''_c$ in order to
obtain at the end a dynamics on two symbols. We point out, however, that Lemma \ref{lem-3.5}
permits to define in the same manner (if one takes $n_1$ large enough) an arbitrary number $\ell$
of pairwise disjoint sets and, correspondingly, to obtain at the end a semiconjugation with
a Bernoulli shift on $\ell$ symbols.
\end{remark}

\medskip

Let $\gamma: [0,1]\to {\mathbb R}^2$ be a continuous mapping such that
${\bar{\gamma}}$ crosses ${\mathcal N}_c$ from its inner boundary $L^1_c$ to its outer boundary
$L^1_0\,$.
Without loss of generality (if necessary, we can restrict $\gamma$ to a closed
subinterval of its domain), we assume that
\begin{equation}\label{eq-4.1g}
{\mathcal E}_1(\gamma(0)) = c < 0 = {\mathcal E}_1(\gamma(1)),\quad
\gamma(t) \in {\mathcal N}_c\,,\; \forall\, t\in [0,1].
\end{equation}

\bigskip

\begin{figure}[h]
\quad\psfig{file=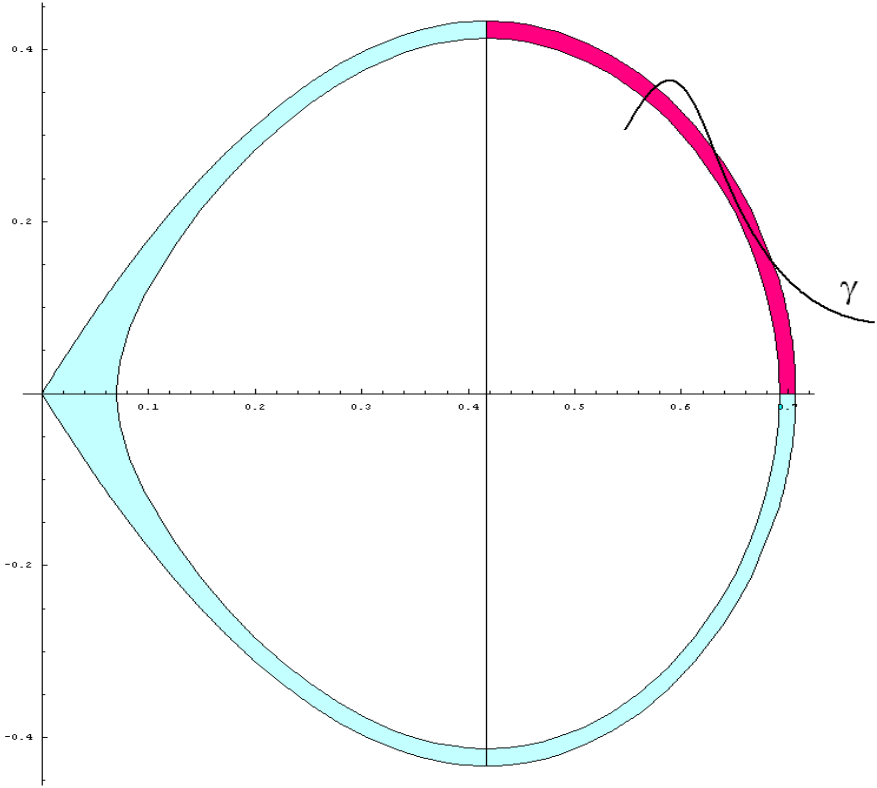,width=5.4in,height=4.2in,angle=0}
\vspace*{0in} \caption{\footnotesize{\em Example of a region ${\mathcal M}^{1}_c$
in which we have put in evidence the subset ${\mathcal N}_c\,.$
The chosen parameters are the same like in Figure 2. We have
taken as the inner boundary of ${\mathcal M}^{1}_c$ the level line passing through
the point $(0.07,0)$ yielding to $c\simeq-0.00850436.$ In this case
we have $a_{n_1} \simeq 0.417157 > a = 0.4$ and ${\mathcal E}_1(a_{n_1},0)\simeq -0.0936779.$
A path $\gamma$ crossing the inner and the outer boundaries of the
set ${\mathcal N}_c$ is also shown. For our argument we consider only the restriction
of $\gamma$ to a subinterval of its domain in order to have condition \eqref{eq-4.1g}
satisfied.}}
\vspace{0.4cm}
\end{figure}

\bigskip

Lemma \ref{lem-3.5} guarantees the existence of $n^* = n^*_{2,c}$
such that for $n_1 > n^*,$
$$\theta(\alpha,z) < - \frac{9 \pi}{2},\;\; \forall\, z\in L_c\cap {\mathcal N}_c\,,$$
follows. On the other hand,
$$\theta(\alpha,z) > -\pi, \;\; \forall\, z\in L_0\cap {\mathcal N}_c\,.$$
If we consider now the composite map
$$[0,1]\ni t\mapsto \theta(\alpha, \gamma(t))\in {\mathbb R},$$
we obtain
$$\theta(\alpha, \gamma(0)) < - \frac{9 \pi}{2} < - \pi < \theta(\alpha, \gamma(1)).$$
A standard continuity argument allows to determine two subintervals
$[t''_1,t''_2]$ and $[t'_1,t'_2]$ of $[0,1]$ with
$0 < t''_1 < t''_2 < t'_1 < t'_2 < 1,$
such that
$$\gamma(t) \in {\mathcal N}'_c\,,\;\forall\, t\in[t'_1,t'_2],
\qquad
\gamma(t) \in {\mathcal N}''_c\,,\;\forall\, t\in[t''_1,t''_2]$$
and, moreover,
\begin{equation}\label{eq-4.2}
\begin{array}{ll}
{\displaystyle{
\theta(\alpha,\gamma(t'_1)) = - \frac{5\pi}{2}\,,\;
\theta(\alpha,\gamma(t'_2)) = - 2\pi,\;}}\\
\\
{\displaystyle{
\theta(\alpha,\gamma(t''_1)) = - \frac{9\pi}{2}\,,\;
\theta(\alpha,\gamma(t''_2)) = - 4\pi.}}
\end{array}
\end{equation}
Let now $p_0$ be an arbitrary, but fixed number such that
$${\bar{p}}_0 < p_0 < p^*.$$
We denote by ${\mathcal W}$ the part of the plane between the
level lines for energy ${\mathcal E}_0$
of system $(E_0)$ passing, respectively, through $(p_0,0)$ and $(p_1,0),$ where
we recall that $(p_1,0)$ is other intersection of the level line
$${\mathcal E}_1(x,y) = {\mathcal E}_1(p_0,0),\quad 0\leq x \leq 1,$$
with the $x-$axis (see Figure 4 below).

\bigskip

\begin{figure}[h]
\quad\psfig{file=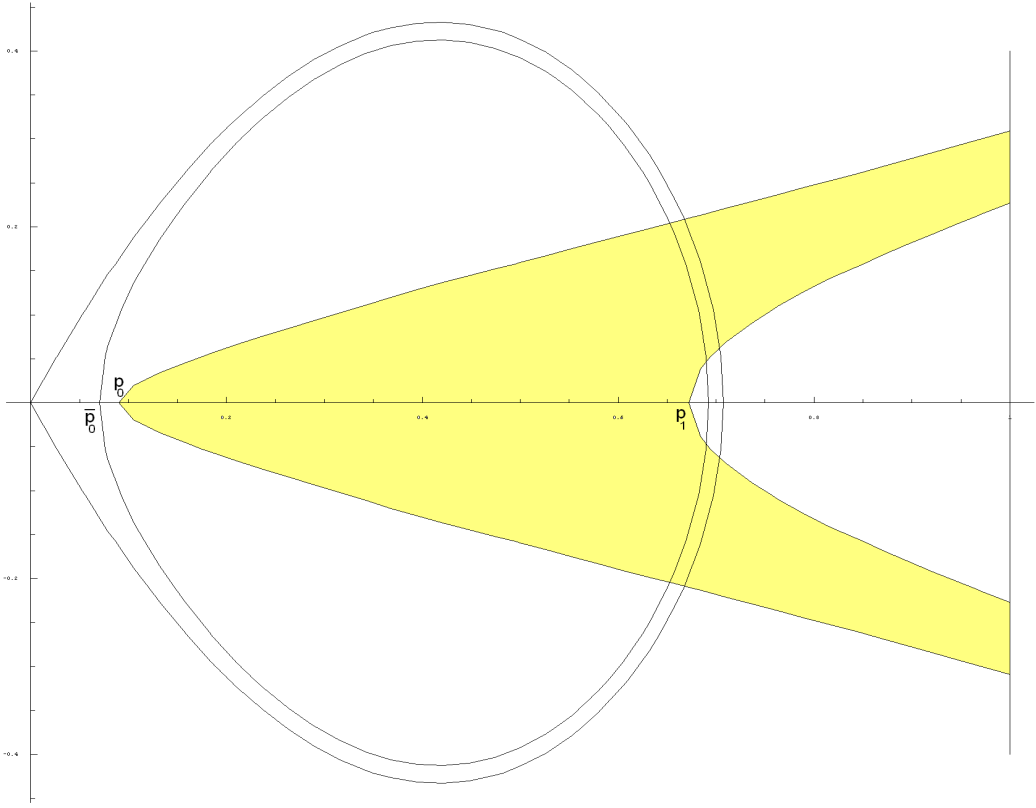,width=5.4in,height=4.2in,angle=0}
\vspace*{0in} \caption{\footnotesize{\em Example of a region
${\mathcal W}\subseteq [0,1]\times {\mathbb R}$ between the
two level lines of energy ${\mathcal E}_0$ passing, respectively, through
$(p_0,0)$ and $(p_1,0).$}}
\vspace{0.4cm}
\end{figure}

\bigskip

The intersection of ${\mathcal M}^1_c$ with ${\mathcal W}$ defines two connected components
$${\mathcal M}^1_c \cap {\mathcal W} = {\mathcal A} \cup {\mathcal B},$$
where we have denoted by ${\mathcal A}$ the component contained in ${\mathcal N}_c$
and by ${\mathcal B}$ its symmetric part with respect to the $x-$axis.
The sets ${\mathcal A}$ and ${\mathcal B}$ are generalized rectangles, that is
they are homeomorphic to the unit square $[0,1]^2.$
\\
In principle,  the set ${\mathcal A}$ (which is the
upper component of ${\mathcal M}^1_c \cap {\mathcal W}$ ) is not necessarily contained in
${\mathcal N}_c\,.$ However, such inclusion occurs when
$n_1$ is sufficiently large, as we have required throughout all the paper.
To check this claim, let us consider the intersection of an arbitrary level line
$${\mathcal E}_0(x,y) = {\mathcal E}_0(s,0), \quad\mbox{for} \, s\in [p_0,p_1]$$
with a level line
$${\mathcal E}_1(x,y) = {\mathcal E}_1(\xi,0), \quad\mbox{for } \,\xi\in [0,{\bar{p}}_0]$$
within the strip $[0,1]\times {\mathbb R}.$ By a symmetry argument, it is
sufficient to look at the intersection points $(x,y)$ with $y > 0.$ This, in turns, yields
to the comparison between the two functions
$$\zeta_0(x,s):=- \frac 1 2 g s^2 + n_0 {\mathcal F}(s) + \frac 1 2 g x^2 - n_0 {\mathcal F}(x)$$
and
$$\zeta_1(x,\xi):=- \frac 1 2 g {\xi}^2 + n_1 {\mathcal F}(\xi) +
\frac 1 2 g x^2 - n_1 {\mathcal F}(x).$$
More precisely, our aim is to prove that
\begin{equation}\label{eq-4.3}
\zeta_1(x,\xi) - \zeta_0(x,s) > 0,\quad\forall\, x\in [{{p}}_0,a_{n_1}],\;
\xi\in [0,{\bar{p}}_0],\; s\in [p_0,p_1].
\end{equation}
Indeed, to check \eqref{eq-4.3} it is sufficient to verify that
\begin{equation}\label{eq-4.3b}
\zeta(x):=\zeta_1(x,{\bar{p}}_0) - \zeta_0(x,p_0) > 0,\quad\forall\, x\in [{{p}}_0,a_{n_1}].
\end{equation}
By the above positions and taking into account that ${\mathcal F}$
is strictly decreasing on $[0,a],$ we have
\begin{eqnarray*}
\zeta(x)&=&
\frac 1 2 g ( {p_0}^2 - {{\bar{p}}_0}^2 )
+ n_0 ({\mathcal F}({\bar{p}}_0) - {\mathcal F}(p_0) )  + (n_0 - n_1)
\bigl( {\mathcal F}(x) - {\mathcal F}({\bar{p}}_0) \bigr)\\
&\geq&
\frac 1 2 g ( {p_0}^2 - {{\bar{p}}_0}^2 )
 + (n_0 - n_1) \int_{{\bar{p}}_0}^{x} F(u)\,du
\end{eqnarray*}
and therefore, we can conclude that
$$\zeta(x)\geq \frac 1 2 g ( {p_0}^2 - {{\bar{p}}_0}^2 ) > 0,\quad\forall\, x\in
[{{{p}}_0},{{\bar{p}}_0}{\!\!\!~}^+],$$
where ${{\bar{p}}_0}{\!\!\!~}^+$ is defined as the unique point satisfying
$$\int_{{{\bar{p}}_0}}^{{{\bar{p}}_0}{\!\!\!~}^+} F(u)\,du = 0,\quad
{{\bar{p}}_0}{\!\!\!~}^+\in\, ]a,1[\,.$$
Hence \eqref{eq-4.3b} is satisfied provided that we have
$$a_{n_1} \leq {{\bar{p}}_0}{\!\!\!~}^+\,.$$
This latter property is true for $n_1$ sufficiently large. In
fact, once that ${{\bar{p}}_0}$ is fixed, we have that
${{\bar{p}}_0}{\!\!\!~}^+ > a$ is fixed too and we already observed that
$a_{\mu} \to a$ as $\mu\to +\infty.$ Thus, having proved our
claim, we know that the generalized rectangles ${\mathcal A}$ and
${\mathcal B}$ look like in Fig. 5.

\begin{figure}[h]
\quad\psfig{file=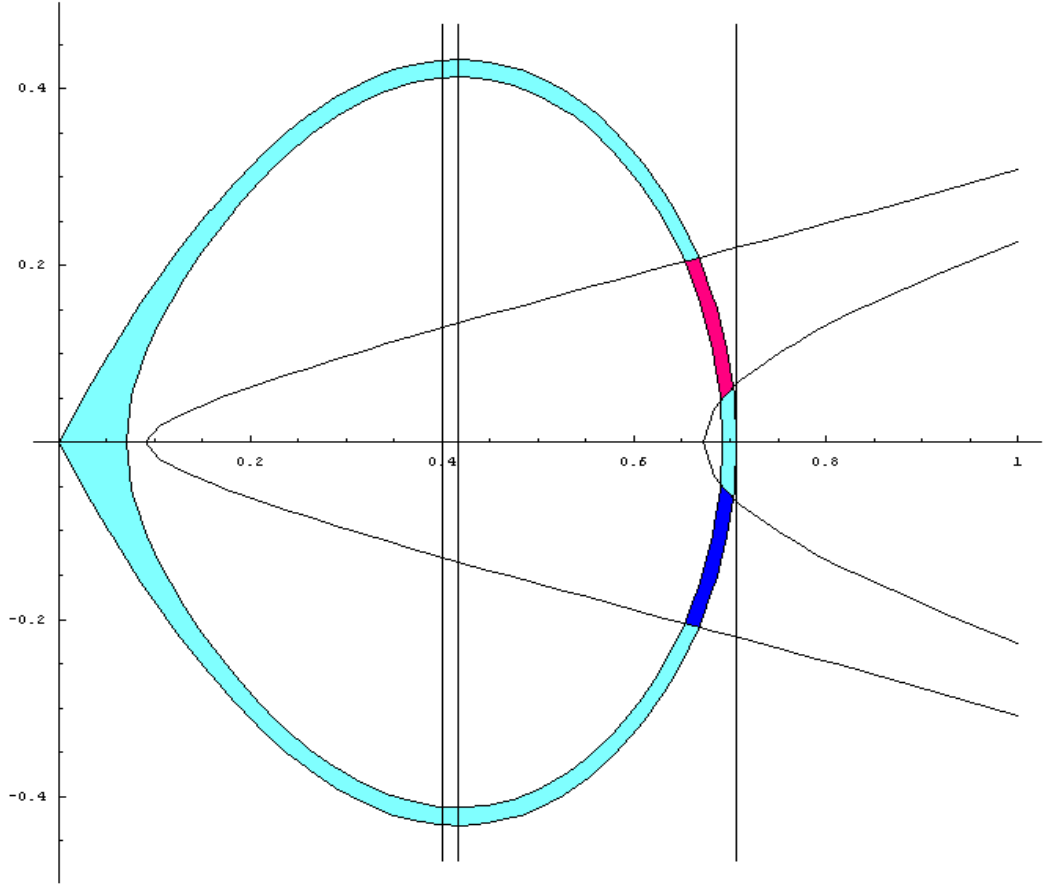,width=7.2in,height=5.6in,angle=0}
\vspace*{0in} \caption{\footnotesize{\em Example of a region ${\mathcal M}^1_c$
in which we have put in evidence the intersection with ${\mathcal W}$
consisting of two generalized rectangles ${\mathcal A}$
(the upper component) and ${\mathcal B}$ (the lower component).
The parameters are the same like in Figure 2 and Figure 3.
Moreover, we have also chosen ${{\bar{p}}_0} = 0.07$ and obtained
${{\bar{p}}_0}{\!\!\!~}^+ \simeq 0.652494.$
In the picture we have drawn the vertical lines
$x=a=0.4$ and $x=a_{n_1}\simeq 0.417157$
to show that
${\mathcal A}\subseteq {\mathcal N}_c\,.$
}}
\vspace{0.4cm}
\end{figure}

\bigskip

We fix now an orientation for sets ${\mathcal A}$ and ${\mathcal
B}.$ More precisely, given a generalized rectangle ${\mathcal R}$
(also called a two-dimensional cell) we select on its boundary two
disjoint compact arcs ${\mathcal R}^-_l$ and ${\mathcal R}^-_r\,,$
like in Section \ref{sec-2}. Such arcs will be conventionally
called the {\itshape{left }} and the {\itshape{right}} sides of
${\mathcal R}.$ We also set ${\mathcal R}^-:= {\mathcal R}^-_l\cup
{\mathcal R}^-_r\,.$
\\
In the case of the sets ${\mathcal A}$ and ${\mathcal B},$ we put
$${\mathcal A}^-:= {\mathcal A}\cap \partial {\mathcal M}^1_c\,,\quad
{\mathcal B}^-:= {\mathcal B}\cap \partial {\mathcal W}.$$
Accordingly, we denote by ${\mathcal A}^-_l$ and ${\mathcal A}^-_r$
the two components of ${\mathcal A}^-$ and, similarly, by
${\mathcal B}^-_l$ and ${\mathcal B}^-_r$
the two components of ${\mathcal B}^-$ (the order is not relevant).
At last, we also define the disjoint compact sets
\begin{equation}\label{eq-dd}
{\mathcal D}_1:= {\mathcal A} \cap{\mathcal N}'_c\,,
\quad {\mathcal D}_2:= {\mathcal A}\cap {\mathcal N}''_c\,.
\end{equation}
Let
$\psi: {\mathbb R}^2 \to {\mathbb R}^2$
be the Poincar\'{e} operator associated to system \eqref{eq-3.2}, that is
$z\mapsto \zeta(\beta,z),$ where $\zeta(t) = \zeta(t,z)$ is the solution
of system \eqref{eq-3.2} with $\zeta(0) = z.$ Observe that
$$\psi = \psi_0 \circ \psi_1\,,$$
where
$\psi_1$ is the Poincar\'{e} map associated to system $(E_1)$ for the
time interval $[0,\alpha]$ and
$\psi_0$ is the Poincar\'{e} map associated to system $(E_0)$ for the
time interval $[0,\beta - \alpha].$

As a consequence of \eqref{eq-4.2} and the fact that the annulus ${\mathcal M}^1_c$
is invariant with respect to the dynamical system associated to $(E_1),$ we have
$$({\mathcal D}_i, \psi_1) : ({\mathcal A}, {\mathcal A}^-) \stretch
({\mathcal B}, {\mathcal B}^-),\quad\mbox{for } i= 1,2.$$
On the other hand, we also have that
$$({\mathcal B}, \psi_0) : ({\mathcal B}, {\mathcal B}^-) \stretch
({\mathcal A}, {\mathcal A}^-).$$
Thus, we can conclude that
$$({\mathcal D}_i, \psi) : ({\mathcal A}, {\mathcal A}^-) \stretch
({\mathcal A}, {\mathcal A}^-),\quad\mbox{for } i= 1,2.$$

The results in \cite{KeYo-01} and \cite{PaZa-04a}, \cite{PaZa-04b} guarantee the
existence of a compact invariant set
$$\Lambda \subseteq {\mathcal D}_1 \cup {\mathcal D}_2 \subseteq {\mathcal A}$$
for which $\psi|_{\Lambda}$ is semiconjugate to a two-sided Bernoulli shift.
Moreover, for each periodic sequence of two symbols there is a point in $\Lambda$
which is periodic. In fact, all the assumptions of Lemma \ref{lem-2.1}
are satisfied with respect to the Poincar\'{e} map $\psi,$ the oriented rectangle
$({\mathcal R},{\mathcal R}^-):= ({\mathcal A},{\mathcal A}^-)$ and the sets
${\mathcal D}_i$ ($i=1,2$) defined in \eqref{eq-dd}.

As a final step, we want to give an interpretation of the above statement
in terms of the solutions to equation
$$x'' - g x + n(t)F(x) = 0.$$
Consider a two-sided sequence of two symbols $(i_k)_{k\in {\mathbb Z}}$
with
$$i_k \in \{1,2\},\quad\forall\, k\in {\mathbb Z}.$$
By the semiconjugation, we know that there is at least one point
$(x_0,y_0)= z_0\in \Lambda$ such that
$$\psi^{k}(z_0)\in {\mathcal D}_{i_k}\,,\quad\forall\, k\in {\mathbb Z}.$$
Let $x(\cdot)$ (globally defined on ${\mathbb R}$)
be the solution of the differential equation with
$x(0) = x_0$ and $x'(0) = y_0\,.$ For such a solution, we have that
\begin{equation}\label{eq-4.k}
(x(k\beta),x'(k\beta))\in {\mathcal D}_{i_k}\,,\quad\forall\, k\in {\mathbb Z}.
\end{equation}
By construction,
$$0 < x <1,\quad\forall\, (x,y)\in {\mathcal D}_{1}\cup {\mathcal D}_{2}\,.$$
Hence, Lemma \ref{lem-3.0} ensures that
$$0 < x(t) < 1,\quad\forall\, t\in {\mathbb R}$$
and thus we have that $x(\cdot)$ is a solution of the original equation
\eqref{eq-3.1} no matter which kind of modification has been performed
on $F$ outside the interval $[0,1].$

Now, we explain the meaning of \eqref{eq-4.k} by analyzing the following two possibilities:
\begin{itemize}
\item{}
$(x(k\beta),x'(k\beta))\in {\mathcal D}_{1}\,,$ for some $k\in {\mathbb Z};$
\item{}
$(x(k\beta),x'(k\beta))\in {\mathcal D}_{2}\,,$ for some $k\in {\mathbb Z}.$
\end{itemize}
In the former case, $x(t)$ has precisely two strict maximum points separated by
one strict minimum point along the time interval
$[(k-1)\beta,(k-1)\beta +\alpha],$
while, in the latter case, $x(t)$ has precisely three strict maximum points separated by
two strict minimum points along the time interval
$[(k-1)\beta,(k-1)\beta +\alpha].$
In both the situations, $x(\cdot)$ is convex in the interval $[(k-1)\beta +\alpha,k\beta],$
with $x'((k-1)\beta + \alpha) < 0$ and $x'(k\beta) > 0.$

At last, we observe that if the sequence $(i_k)_{k\in {\mathbb Z}}$ is
periodic, that is $i_k = i_{k+\ell}$ for some $\ell \geq 1,$ then we can take
$x(\cdot)$ as a $\ell \beta$-periodic solution as well.

The proof of our main theorem is complete.
\hfill$\qed$

\bigskip

The proof of Theorem \ref{th-1.2} relies on slight modifications of the arguments described above
and therefore it is omitted.

Both Theorem \ref{th-1.1} and Theorem \ref{th-1.2} are stable with respect
to small (in the $L^1$-norm) perturbations of the weight function $n(\cdot).$

\section{Appendix}\label{sec-5}
In this section we give a proof of Lemma \ref{lem-3.0}. Actually, we present a more
general result which improves \cite[Lemma 2.1]{ZaZa-05}
and may have some independent interest.

We consider the second order scalar ODE
\begin{equation}\label{eq-5.1}
x'' + h(t,x) = 0
\end{equation}
where $h: [t_0,t_1]\times{\mathbb R}\to {\mathbb R}$ is a Carath\'{e}odory function,
that is, we assume that
$h(\cdot,s)$ is measurable for all $s\in {\mathbb R},$
$h(t,\cdot)$ is continuous for almost every $t\in [t_0,t_1]$ and,
for every $r > 0$ there is a measurable function $\rho_r\in L^1([t_0,t_1],{\mathbb R}^+)$
such that $|h(t,s)|\leq \rho_r(t)$ for almost every $t\in [t_0,t_1]$ and
every $s\in [-r,r].$ Solutions of $(\ref{eq-5.1})$
are considered in the Carath\'{e}odory
sense too (cf. \cite[p.28]{Ha-80}).

\begin{lemma}\label{lem-5.1} Suppose
$$
h(t,0) \equiv 0
$$
and, for a.e. $t\in [t_0,t_1],$
$$
h(t,s) > 0, \;\forall \, s < 0,\qquad
h(t,s) < 0, \;\forall \, s \geq 1.
$$
Let $x(\cdot)$ be a solution of \eqref{eq-5.1} defined on $[t_0,t_1]$ and
such that
$$0 < x(t_0), x(t_1) < 1.$$
Then
$0\leq x(t) \leq 1,\,\forall\, t\in [t_0,t_1].$ Moreover,
$x(t)>0,\,$ $\forall\, t\in [t_0,t_1]$ if
$h$ is (locally) lipschitzian at $s=0$ and
$x(t)< 1,\,$ $\forall\, t\in [t_0,t_1],$
if $h$ is (locally) lipschitzian at $s=1.$
\end{lemma}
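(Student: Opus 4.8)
The plan is to read this as a maximum principle: first establish the a priori confinement $0\le x(t)\le 1$ by a convexity/concavity argument, and then upgrade to the strict inequalities under the extra local Lipschitz hypotheses by a quantitative uniqueness (Gronwall type) argument. Throughout I use that a Carath\'{e}odory solution of \eqref{eq-5.1} has $x'$ absolutely continuous, so $x\in C^1$ and $x''=-h(t,x)\in L^1$ (since $x$ is bounded, $|h(t,x(t))|\le \rho_r(t)$).

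First I would prove $x(t)\le 1$ on $[t_0,t_1]$. Put $V:=\{t\in[t_0,t_1]:x(t)>1\}$. Since $x$ is continuous and $x(t_0),x(t_1)<1$, the set $V$ is open and bounded away from $t_0$ and $t_1$, so each connected component of $V$ is an open interval $(\tau_0,\tau_1)$ with $\tau_0,\tau_1\in(t_0,t_1)$ and, by maximality, $x(\tau_0)=x(\tau_1)=1$. On such a component $x(t)>1\ge 1$, hence the sign condition on $h$ gives $x''(t)=-h(t,x(t))>0$ for a.e.\ $t\in(\tau_0,\tau_1)$; thus $x'$ is strictly increasing and $x$ is strictly convex on $[\tau_0,\tau_1]$, so it lies strictly below its chord, i.e.\ $x(t)<1$ on $(\tau_0,\tau_1)$ -- a contradiction. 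Hence $V=\emptyset$. The bound $x(t)\ge 0$ follows in the same way, working with $\{t:x(t)<0\}$, where now $x''=-h(t,x)<0$ a.e., so $x$ is strictly concave on each component and lies above its (zero) chord.

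Next I would show $x(t)>0$ for all $t$ when $h$ is locally Lipschitz at $s=0$. Suppose $x(\bar t)=0$ for some $\bar t$; necessarily $\bar t\in(t_0,t_1)$, and since $\bar t$ is an interior minimum of the $C^1$ function $x$, also $x'(\bar t)=0$. Because $h(t,0)\equiv 0$, the constant $x\equiv 0$ solves the initial value problem for $x''=-h(t,x)$ with data $(0,0)$ at $\bar t$; the local Lipschitz condition near $s=0$ yields Carath\'{e}odory uniqueness for that problem as long as the solution stays near $0$, and $x\equiv 0$ never leaves that region, so $x(t)=0$ on a neighbourhood of $\bar t$. Therefore $Z:=\{t:x(t)=0\}$ is relatively open and closed in $(t_0,t_1)$; since $(t_0,t_1)$ is connected and $x(t_0),x(t_1)\neq 0$, it follows that $Z=\emptyset$, i.e.\ $x>0$ on $[t_0,t_1]$.

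Finally, the strict bound $x(t)<1$ under local Lipschitzianity at $s=1$, which I expect to be the main technical point, since here -- unlike at $s=0$ -- there is no constant solution to compare against. Suppose $x(\bar t)=1$; again $\bar t\in(t_0,t_1)$ and, $\bar t$ being an interior maximum, $x'(\bar t)=0$. Set $u:=1-x\ge 0$, so $u(\bar t)=u'(\bar t)=0$ and $u''=h(t,1-u)$. On a neighbourhood of $\bar t$ where $u\le\delta$ (the Lipschitz radius), the Lipschitz bound together with $h(t,1)<0$ gives $u''(t)\le h(t,1)+\ell(t)\,u(t)\le \ell(t)\,u(t)$ a.e., with $\ell\in L^1$ the Lipschitz weight. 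Integrating twice from $\bar t$ and using $u(\bar t)=u'(\bar t)=0$ gives, on a right neighbourhood of $\bar t$, $u(t)\le\bigl(\sup u\bigr)\int_{\bar t}^{t}\!\!\int_{\bar t}^{\tau}\ell(s)\,ds\,d\tau$; the double integral tends to $0$ as $t\to\bar t^{+}$ by absolute continuity of the Lebesgue integral, so on a small enough interval its value is $\le 1/2$, which forces $\sup u=0$ there, and symmetrically to the left. Hence $x\equiv 1$ on a neighbourhood of $\bar t$, so $x''\equiv 0$ there, contradicting $x''=-h(t,1)>0$ a.e.\ on that neighbourhood. Therefore $x(t)<1$ for every $t$, which completes the proof.
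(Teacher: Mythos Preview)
Your proof is correct, but it follows a different line from the paper's in two of the four steps.

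For the non-strict bounds $0\le x\le 1$, the paper argues by an energy integral: on an excursion interval where $x<0$ with $x(\sigma_0)=x(\sigma_1)=0$, it multiplies $-x''=h(t,x)$ by $x$ and integrates to get $\int x'^2=\int h(t,x)\,x\,dt<0$, a contradiction (and similarly, multiplying by $x-1$ for the upper bound). Your convexity/concavity argument on excursion intervals is equally valid and perhaps more geometric; the paper's multiplication trick, however, extends verbatim to the damped equation $x''+cx'+h(t,x)=0$ (as the authors remark at the end), whereas your convexity argument does not, since $x''=-cx'-h$ need not have a sign.

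For the strict inequality $x<1$, the paper avoids a Gronwall computation by a comparison trick: it introduces the truncated nonlinearity $\tilde h(t,s):=h(t,\max\{1,s\})$, which is still Lipschitz at $s=1$ and strictly negative everywhere, solves the Cauchy problem $y''+\tilde h(t,y)=0$, $y(t^*)=1$, $y'(t^*)=0$, observes that $y(t)>1$ for $t\ne t^*$ (so $y$ also solves the original equation), and then invokes local uniqueness to get $x=y$ near $t^*$, contradicting $x\le 1<y$. Your direct differential-inequality argument on $u=1-x$ reaches the same contradiction (namely that $x\equiv 1$ on a neighbourhood, which is incompatible with $x''=-h(t,1)>0$ a.e.) and is perfectly sound; the paper's device is shorter but relies on having a second solution to compare with.

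For $x>0$, both proofs use uniqueness against the trivial solution $x\equiv 0$; your clopen formulation of the zero set is just a more explicit way of globalising the local uniqueness.
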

\begin{proof}
At first we prove that $x(t) \geq  0$ for all $t\in [t_0,t_1].$
If, by contradiction $x({\tilde{t}}) < 0$ for some ${\tilde{t}}\in [t_0,t_1],$
then we can find $\sigma_0\,,\sigma_1$ with
$t_0 < \sigma_0 < {\tilde{t}} < \sigma_1 < t_1$
such that
$x(\sigma_0) = x(\sigma_1) = 0$ and $x(t) < 0$
for all $t\in \,]\sigma_0,\sigma_1[\,.$
Multiplying equation
\begin{equation}\label{eq-5.2}
- x''(t) = h(t,x(t))
\end{equation}
by $x(t)$  and then integrating on $[\sigma_0,\sigma_1],$ we
obtain $ \int_{\sigma_0}^{\sigma_1} x'(t)^2\,dt = \break
\int_{\sigma_0}^{\sigma_1} h(t,x(t))x(t)\,dt < 0, $ a
contradiction. A similar computation shows that $v(x)\leq 1$ for
all $x\in [t_0,t_1].$ In fact, if by contradiction $x({\hat{t}}) >
1$ for some ${\hat{t}}\in [t_0,t_1],$ then we can find
$\tau_0\,,\tau_1$ with $t_0 < \tau_0 < {\hat{t}} < \tau_1 < t_1$
such that $x(\tau_0) = x(\tau_1) = 1$ and $x(t) > 1$ for all $t\in
\,]\tau_0,\tau_1[\,.$ Multiplying equation \eqref{eq-5.2} by
$x(t)-1$  and then integrating on $[\tau_0,\tau_1],$ we obtain $
\int_{\tau_0}^{\tau_1} x'(t)^2\,dt = \break \int_{\tau_0}^{\tau_1}
h(t,x(t))( x(t) - 1)\,dt < 0, $ a contradiction.
\\
If we have $\min x(t) = x(t_*) = 0,$ then also $x'(t_{*}) = 0$
since $t_{*}\in \,]t_0,t_1[\,.$
Hence $x(\cdot)$ is a solution of the initial value problem
$$
x'' + h(t,x) = 0,\,\quad x(t_{*}) = x'(t_{*}) = 0
$$
and therefore, if $h$ satisfies a Lipschitz condition at $s=0,$
we must have $x(t) = 0,\forall\, t\in [t_0,t_1]$ (a contradiction with
our hypotheses).
\\
Assume now that $h$ satisfies a Lipschitz condition at $s =1$ and
$\max x(t) = x(t^*) = 1.$ In this case, $x'(t^*) = 0$
because $t^{*}\in \,]t_0,t_1[\,.$
Define also
the auxiliary function
$\displaystyle{
{\tilde h}(t,s) := h(t,\max\{1,s\})
}$
which is lipschitzian at $s =1$ too
and let $y(t)$ be a solution of the
Cauchy problem
$$
x'' + {\tilde h}(t,x) = 0,\,\quad x(t^*) = 1,\;  x'(t^*) = 0
$$
defined on a maximal interval of existence. Using the fact that
${\tilde h}(t,s) < 0$ for all $s\in{\mathbb R}$ and almost every $t\in [t_0,t_1],$
we find that $y(t) > 1$ for all $t \not= t^*$ in the domain
of $y(\cdot)$ and therefore
$y(\cdot)$ is a solution of \eqref{eq-5.1} as well. By the local uniqueness
of the solutions to the Cauchy problem under consideration we conclude that
$x(\cdot) = y(\cdot)$ in a neighborhood of $t^*$ (a contradiction to
$x(t) \leq 1 < y(t)$ for $t\not=t^*$).
\end{proof}

\noindent
We remark that the same proof works for the slightly more general equation
$$x'' + c x'+ h(t,x) = 0,$$
under the same assumptions on $h$ and for every $c\in {\mathbb R}.$

\end{document}